\title{On Effective Banach-Mazur Games and an application to the Poincar\'e Recurrence Theorem for Category} 
\titlerunning{Effective Banach-Mazur Games and Poincar\'e Recurrence} 
\author{Prajval Koul}{Department of Computer Science and Engineering, Indian Institute of Technology Kanpur, India. \and \url{https://prajvalkoul.github.io/}}{prajvalk21@iitk.ac.in}{https://orcid.org/0000-0002-8879-8330}{}
\author{Satyadev Nandakumar}{Department of Computer Science and Engineering, Indian Institute of Technology Kanpur, Kanpur, Uttar Pradesh, India. \and \url{https://www.cse.iitk.ac.in/users/satyadev/} }{satyadev@cse.iitk.ac.in}{https://orcid.org/0000-0002-0214-0598}{}
\authorrunning{Prajval Koul and Satyadev Nandakumar}
\keywords{Recurrence, Topology, Category, Computable Analysis, Computable Toplogy, Dynamical Systems.}
\newcommand{\R}{\mathbb{R}}
\newcommand{\Q}{\mathbb{Q}}
\newcommand{\N}{\mathbb{N}}
\newcommand{\X}{\mathcal{X}}
\begin{document}

\maketitle

\begin{abstract}
  The classical Banach-Mazur game characterizes sets of first category
  in a topological space. In this work, we show that an effectivized
  version of the game yields a characterization of sets of effective
  first category. Using this, we provide a game-theoretic proof of an
  effective theorem in dynamical systems, namely the category version
  of Poincar\'{e} Recurrence. The Poincar\'e Recurrence Theorem for
  category states that for a homeomorphism without open wandering
  sets, the set of non recurrent points forms a first category
  (meager) set. As an application of the effectivization of the
  Banach-Mazur game, we show that such a result holds true in
  effective settings as well.
\end{abstract}

\section{Introduction}
Existential arguments in classical mathematics often rely on the axiom
of choice, or its equivalent formulations like Zorn's lemma or the
Hausdorff maximal principle. Two major approaches in mathematics to
proving the existence of objects are probability and Baire category,
both of which abstractly study the ``size'' of a set of objects with
some property. Abstractly, if we are able to show that the ``size'' of
the set of objects is large, then this provides an indirect proof that
such an object must exist. In combinatorics, the \emph{probabilistic
  method} is a highly successful tool whereby complicated objects can
be shown to exist without necessarily providing a way to construct a
single concrete instance. Important combinatorial concepts like
expander graphs were initially shown to exist using such indirect
methods \cite{Pinsker1973} before explicit constructions were
obtained. The realm of algorithms and computability theory often
involves extracting the ``effective content'' of these theorems -
trying to make explicit the algorithmic content of these theorems,
insisting on explicit and efficient constructions of the objects.
These efforts often involve entirely new proofs of the classical
result.

In this work, we study the major tool in topology which is widely used
in analysis and topology to study the ``size'' of a class of sets,
namely Baire category. A set is small in this sense if it is
topologically meager (of first category). The Banach-Mazur game is a
two-player game where players take turns selecting from a class of
sets, and the outcome of the game characterizes sets of first
category. This game is one of the problems (problem number 43) in the
famous Scottish Book \cite{Mauldin2015}, the record of the
mathematical problems discussed in the Scottish Caf\'{e} in the city
of Lw\'{o}w, Poland (now Lviv, Ukraine) during the 1930s. Mazur
proposed the game in the Euclidean setting, and established one
direction. Banach proved the converse \cite{Banach1930} (see also
Mycielski et. al. \cite{Mycielski1956}). Kuratowski
\cite{Kuratowski1948} and Oxtoby\cite{Oxtoby1957} generalized this
game, and we study the effectivizations of these settings. Baire
Category has been previously studied in computability-theoretic
settings by Lisagor \cite{Lisagor1981}, and complexity theoretic
settings by Lutz \cite{Lutz1987}, \cite{Lutz1992}, Fenner
\cite{Fenner1995} and Breutzmann, Juedes and Lutz
\cite{BreutzmannJuedesLutz2001}.


Our first major result in this work is to provide an effective,
game-theoretic characterization of sets of the first (Baire) category,
also known as meager sets, using an effectivization of the classical
Banach-Mazur game. We then show two an easy application of this
effective framework - to show that the set of Liouville numbers is
effectively co-meager. 

We further use this effective Banach-Mazur game to prove the effective
version of a fundamental result in dynamical systems, namely the
\emph{topological} version of the Poincar\'{e} recurrence theorem,
established first in the probabilistic setting by Poincar\'{e}
\cite{Poincare1899}, and generalized to topological settings initially
by Birkhoff \cite{Birkhoff1927} and by Hilmy \cite{Hilmy1940}. We show
that in every computable dynamical system, the set of non-recurrent
points is of effective first category (meager). Our proof is
game-theoretic, giving a computably enumerable winning strategy for
one of the players to win on the set of recurrent points.

In \cref{sec:prerequisites}, we recall and introduce several
topological notions, both classical and effective. In
\cref{sec:efftopo}, we discuss in detail the effective notions like
effective dense sets, computable topological spaces etc., upon which
the subsequent sections are built. In \cref{sec:effbmgame}, we
introduce the effective version of the Banach-Mazur game. There are
two versions of this effectivization, based on the nature of
meagerness of the set under consideration. Towards the end, we discuss
the effectivization of category in the Poincar\'e Recurrence Theorem
for bounded open regions of $\mathbb{R}^n$.

\section{Preliminaries}\label{sec:prerequisites}

This section consists of the required definitions and some basic results which we use in our work. We denote the binary alphabet by $\Sigma=\{0,1\}$. The set of finite binary strings is denoted by $\Sigma^*$, and the set of infinite binary sequences by $\Sigma^\infty$. The empty string is denoted by $\lambda$. The length of a finite string $w\in\Sigma^*$ is denoted by $\ell(w)$. For $x,y\in\Sigma^*$, $x$ being a prefix of $y$ is denoted as $x\sqsubseteq y$. The concatenation of two strings $x,y\in\Sigma^*$ is denoted by $x^\frown y\in\Sigma^*$. 

The set of rationals and reals is denoted by the usual symbols $\mathbb{Q}$ and $\mathbb{R}$ respectively. The set of natural numbers is denoted by $\mathbb{N}$. We assume a binary encoding $e:\mathbb{Q}\to\Sigma^*$ of the set of rationals. The complement of a set $A$ is denoted as $A^c$. For a set $X$, its power set is denoted by $\mathcal{P}(X)$. For a metric space $(X,\rho)$, the diameter of a set $A\subseteq X$ is denoted by $diam\ A=\sup_{x,y\in A}\rho(x,y)$. The disjoint union of two sets $A$ and $B$ is denoted by $A\coprod B$. 

The domain of a function $f:A\to B$ is denoted by $dom(f)=A$. A partial computable function $f$ from a countable set $A$ to a set $B$, denoted $f:A\dashrightarrow B$ is a function which is computable by a Turing Machine. A partial computable function is also referred to as a \textit{computably enumerable} (c.e.) function. Such function may be defined only on a subset of $A$. A total computable function $g:A \to B$ is a partial computable function whose domain is A. A computable enumeration is a partial surjection with domain $\N$. It will also be convenient to represent elements using strings. A partial computable surjection $f: \Sigma^* \dashrightarrow B$ is also called a \emph{representation} of elements in $B$. 

\subsection{Topology}

We outline the basic notions in topology which we require in our work. For a detailed exposition of these concepts, the reader may refer to the book on general topology by Engelking \cite{Engelking1989}. Briefly, a topological space $(X,\tau)$ is a space $X$ together with a class of sets $\tau$ called open sets. A class $\mathcal{B}$ of open sets is said to be a \emph{basis} for the topology on $X$ if every non-empty open set can be expressed as an arbitrary union of members of the basis. Any point in $X$ is an element of some basis set, and for every $A$, $B$ $\in \mathcal{B}$, there is a $C \in \mathcal{B}$ such that $C \subseteq A \cap B$ (see, for example, Engelking \cite{Engelking1989}, p. 12). A \emph{closed set} is the complement of an open set. There are sets which are neither open nor closed. In certain topologies, there are also sets which are \emph{both} open and closed, called clopen sets. The \emph{closure} of a set $A$, denoted $\overline{A}$, is the smallest closed set containing $A$. The \textit{interior} of a set $A$, denoted by $A^o$, is the largest open set that $A$ contains. 

A set $A$ is called \emph{dense} if its closure is $X$. A set $A$ is dense in an arbitrary open set $G$ if $(\overline{A \cap G}) = \overline{G}$. Every dense set is dense in every open set in the topology (see Engelking \cite{Engelking1989}, p.25). A set $A$ is \emph{nowhere dense} if every open set $B_1$ contains an open subset $B_2$ such that $B_2 \cap A = \emptyset$. 

A \emph{meager} set, or a set \emph{of first category}, is one which is a countable union of nowhere dense sets. The complement of a meager set is said to be \emph{co-meager}, or \emph{residual}. A set that is not meager is said to be \emph{of second category} (see Oxtoby \cite{Oxtoby1980}, p.40). As in the case of open and closed sets, there are sets which are neither meager, nor co-meager. In certain topologies, a set can be meager as well as co-meager. 

Note that meager sets can be dense - for example, the set of rationals are dense in $\mathbb{R}$, and they can clearly be expressed as a countable union of singleton sets which are nowhere dense.

\section{Effective Topological Spaces}\label{sec:efftopo}
In this section, we define the effective topological notions we require in later sections. In the most general setting, we work with computable $T_0$ spaces defined by Grubba, Schr\"oder and Weihrauch \cite{Grubba2007}. Classically, a $T_0$ space is a topological space $(X,\tau)$ such that for every pair of distinct $x_1$, $x_2$ $\in X$, there is an open set that contains one and only one of these points. 

\begin{definition}[Representation of a countable class \cite{Grubba2007}]
	For a countable class $\mathcal{U}$, the partial computable 	surjection $\nu:\Sigma^*\dashrightarrow\mathcal{U}$, where $(\forall w\in dom(\nu))(\nu(w)\ne\varnothing)$, is said to be the representation of $\mathcal{U}$. 
\end{definition}

It should be noted that the domain of $\nu$ is a c.e. set. $\nu^{-1}(U)$ refers to a \emph{name} of the set $U\in\mathcal{U}$. Computationally, it is necessary to ``name'' the basic open sets, hence we work with a countable base, necessitating a second countable $T_0$ space.

\begin{definition}[Computable $T_0$ Space \cite{Grubba2007}]
	A \emph{computable $T_0$ space} is a tuple $\X=(X,\tau,\beta,\nu)$ such that $(X,\tau)$ is a second countable $T_0$ space, and $\beta$ is a countable basis for the space where $\nu: \Sigma^* \dashrightarrow \beta$ is a representation of the countable basis $\beta$, such that the following condition holds. 
	\begin{align}
		\label{label_intersection}
		\left(\forall u, v \in \text{dom}(\nu)\right)\quad \left(\nu(u)\cap\nu(v) =
		\bigcup \{\nu(w) \mid (u,v,w) \in B\} \text{ for a c.e. set } B\right),
	\end{align}
	and $U \ne \emptyset$ for $U \in \beta$.
\end{definition}

The members of the basis of a computable topological space are also called \textit{basic open sets} of the space. The condition (\ref{label_intersection}) ensures that the intersection of sets in the basis is c.e.. The property ``$U\ne\emptyset$ for $U\in\beta$'' excludes the empty set from the basis $\beta$. This has the following consequence. 

\begin{lemma}[\cite{Grubba2007}]
	For a \emph{computable $T_0$ space} $(X,\tau,\beta,\nu)$, the relation $\{(u,v)\in\Sigma^*\times\Sigma^*:\nu(u)\cap\nu(v)\ne\emptyset\}$ is c.e.. 
\end{lemma}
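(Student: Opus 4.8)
The plan is to express membership in the relation as an existential projection of the c.e.\ set $B$ supplied by condition~(\ref{label_intersection}), using crucially the hypothesis that every basic open set is non-empty.

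First I would fix $u,v\in\Sigma^*$ and appeal to~(\ref{label_intersection}) to obtain the c.e.\ set $B$ (a subset of $\mathrm{dom}(\nu)^3$) with
\[
  \nu(u)\cap\nu(v)=\bigcup\{\nu(w)\mid(u,v,w)\in B\}.
\]
The key observation is that each set $\nu(w)$ appearing on the right-hand side is non-empty, since the definition of a computable $T_0$ space stipulates $U\ne\emptyset$ for every $U\in\beta$. Consequently the union is non-empty exactly when the index set $\{w\mid(u,v,w)\in B\}$ is non-empty, giving the equivalence
\[
  \nu(u)\cap\nu(v)\ne\emptyset \iff (\exists w\in\Sigma^*)\,\big((u,v,w)\in B\big).
\]
This is the heart of the argument, and it is precisely where the exclusion of the empty set from $\beta$ is used: were some $\nu(w)$ allowed to be empty, a triple $(u,v,w)$ could lie in $B$ while the intersection stayed empty, breaking the equivalence.

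Next I would turn the right-hand side into a semi-decision procedure. Since $B$ is c.e., fix a Turing machine enumerating its elements. On input $(u,v)$, dovetail this enumeration and halt and accept as soon as a triple whose first two coordinates are $u$ and $v$ is produced. If $\nu(u)\cap\nu(v)\ne\emptyset$ the equivalence guarantees such a triple exists and will eventually be enumerated, so the procedure halts; if the intersection is empty then the union is empty, so by non-emptiness of the $\nu(w)$ no such triple lies in $B$ and the procedure never halts. This correctly semi-decides the relation.

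I expect the only real subtlety to be the bookkeeping around the domain of $\nu$: one must check that the triples of $B$ range over $\mathrm{dom}(\nu)$ so that each $\nu(w)$ is genuinely a (non-empty) basic open set, and that pairs $(u,v)$ with $u$ or $v$ outside $\mathrm{dom}(\nu)$ are handled correctly (no triple of $B$ has them as its first two coordinates, so the procedure simply never halts, as desired). Everything else is the routine fact that the existential projection of a c.e.\ relation is again c.e.
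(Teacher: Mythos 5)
Your proposal is correct and follows essentially the same route as the paper: the paper likewise observes that $\nu(u)\cap\nu(v)\ne\emptyset$ if and only if some witness $w$ with $\nu(w)\subseteq\nu(u)\cap\nu(v)$ exists (via condition~(\ref{label_intersection}) and the non-emptiness of basic open sets), discovered by a standard dovetailing enumeration of $B$. Your write-up simply spells out in more detail the same argument the paper gives in one sentence.
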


The above lemma follows from the fact that the intersection is non-empty if, and only if, there is a $w\in\Sigma^*$ such that $\nu(w)\subseteq\nu(u)\cap\nu(v)$, which can be discovered by a standard dovetailing argument. 

Observe that $\mathbb{R}$ with the standard topology is an example of a computable $T_0$ space. 

We now define the notions in effective topology which we use in our work. In this section, we define only those notions which we require throughout our discussion. Later, we have results which hold in computable metric spaces, and computable dynamical systems. We introduce those notions in the relevant sections. 

\begin{definition}[c.e. open sets and co-c.e. closed sets]
	A set $U$ is said to be a \emph{computably enumerable open} (c.e. open) set if it can be written as a computably enumerable union of the basic open sets of the space. A set $F$ is said to be a \emph{co-c.e. closed set} if $F^c$ is c.e. open. 
\end{definition}

Note that the terms \textit{effective open} and \textit{c.e. open} are used interchangeably throughout the sections. 

It follows easily from the definition that a c.e. union of c.e. open sets is c.e. By taking complements, a c.e. intersection of co-c.e. closed sets is co-c.e. closed. 

We now discuss notions of meagerness of sets in effective spaces. These are the central notions in our work. 

\begin{definition}[Effective nowhere dense set]
	Let $X=(X,\tau,\beta,\nu)$ be a computable $T_0$ space. A set 	$A\subseteq X$ is said to be effectively nowhere dense in $X$ if there exists a computable function $f:\Sigma^*\to\Sigma^*$ such that for $w \in \text{dom}(\nu)$, we have 
	\begin{align}
		\nu(f(w))\subseteq(\nu(w)\setminus A)^o.
	\end{align}
\end{definition}

The above definition allows for uncountable sets to be effectively nowhere dense. In the standard topology of $\R$, the set of natural numbers is an effective nowhere dense set. In the same topology, it is possible to show that the Cantor set is an effective nowhere dense set. 

The following are some useful results regarding effective nowhere dense sets. 

\begin{lemma}
	\label{lem:effective_nowhere_dense}
	Let $\X=(X,\tau,\beta,\nu)$ be a computable $T_0$ space. Then the 	following hold. 
	\begin{enumerate}[noitemsep,nolistsep]
		\item Any finite intersection of effective nowhere dense sets is 	effectively nowhere dense. 
		\item The closure of an effective nowhere dense set is effectively 	nowhere dense. 
	\end{enumerate}
\end{lemma}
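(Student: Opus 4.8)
The plan is to exploit two monotonicity facts, one set-theoretic and one topological, so that in both parts the witnessing function can be reused rather than constructed afresh.

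For the first item, I would begin by recording the elementary observation that effective nowhere density is inherited by subsets. Suppose $B \subseteq A$ and $A$ is effectively nowhere dense with witness $f$. Fix $w \in \text{dom}(\nu)$. Since $B \subseteq A$, we have $\nu(w) \setminus A \subseteq \nu(w) \setminus B$, and as the interior operator is monotone, $(\nu(w) \setminus A)^o \subseteq (\nu(w) \setminus B)^o$. Combining this with $\nu(f(w)) \subseteq (\nu(w) \setminus A)^o$ gives $\nu(f(w)) \subseteq (\nu(w) \setminus B)^o$, so the \emph{same} $f$ witnesses the effective nowhere density of $B$. The first item then follows immediately: given effectively nowhere dense sets $A_1, \dots, A_n$ with witnesses $f_1, \dots, f_n$, the intersection $\bigcap_{i=1}^{n} A_i$ is a subset of $A_1$, hence effectively nowhere dense with witness $f_1$.

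For the second item, the key topological fact is that an open set disjoint from $A$ is automatically disjoint from $\overline{A}$: if $U$ is open with $U \cap A = \emptyset$ and some $x \in U \cap \overline{A}$, then $U$ would be an open neighbourhood of $x$ meeting $A$ (by the characterization of points of the closure), a contradiction. I would apply this with $U = \nu(f(w))$, where $f$ is a witness for $A$. Indeed $\nu(f(w)) \subseteq (\nu(w) \setminus A)^o \subseteq \nu(w) \setminus A$, so $\nu(f(w))$ is a basic open set disjoint from $A$; by the fact above it is also disjoint from $\overline{A}$, that is, $\nu(f(w)) \subseteq \nu(w) \setminus \overline{A}$. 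Since $\nu(f(w))$ is open, it lies inside $(\nu(w) \setminus \overline{A})^o$, so the same $f$ witnesses that $\overline{A}$ is effectively nowhere dense.

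The routine points to verify are that the witnessing function remains total and computable (it is literally unchanged, so this is immediate) and that $f(w) \in \text{dom}(\nu)$ whenever $w \in \text{dom}(\nu)$, which is already guaranteed by the hypothesis that $A$ (respectively $A_1$) is effectively nowhere dense. The only genuinely topological input is the closure fact used in the second item; I expect this to be the main—though still mild—obstacle, since everything else reduces to monotonicity of set difference and interior under inclusion.
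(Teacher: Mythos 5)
Your proof is correct, and for the second item it coincides with the paper's own argument: both rest on the single topological fact that an open set disjoint from $A$ is disjoint from $\overline{A}$, so the very same witness $f$ serves for the closure. For the first item, however, you take a genuinely different route. The paper fixes $w$, extracts for each $A_i$ a c.e.\ open set $U_i \subseteq \nu(w)$ with $U_i \cap A_i = \varnothing$, and offers $\bigcap_{i=1}^n U_i$ as the open set avoiding $\bigcap_{i=1}^n A_i$; you instead isolate the monotonicity fact that any subset $B \subseteq A$ of an effectively nowhere dense set is effectively nowhere dense with literally the same witness, and then note $\bigcap_{i=1}^n A_i \subseteq A_1$. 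Your route is not only more elementary but also more robust: the paper's intersection $\bigcap_{i=1}^n U_i$ can perfectly well be empty (take $A_1$, $A_2$ disjoint singletons in $\R$ whose witnesses return disjoint balls), and an empty set cannot contain the non-empty basic open set $\nu(f(w))$ that the definition of effective nowhere density demands, so the paper's construction would need a repair (for instance, nesting the choices successively, $U_{i+1} \subseteq U_i$, as one does in the classical proof that finite \emph{unions} of nowhere dense sets are nowhere dense) which your subset argument renders unnecessary. What the paper's symmetric treatment of all $n$ sets would buy is nothing here, since for an intersection a single witness suffices; your observation that the witness of $A_1$ alone does the job is exactly the right level of generality, and it also makes the uniformity and totality of the witnessing function immediate rather than something to be argued.
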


\begin{proof}
	\begin{enumerate}[noitemsep,nolistsep]
		\item Suppose $A_1$, $\dots$, $A_n$ are effectively nowhere dense. Let $w \in \Sigma^*$ be arbitrary. Since the $A_i$s are nowhere dense, there are c.e. open sets $U_1$, $\dots$, $U_n$ such that for each $1 \le i \le n$, $U_i \subseteq \nu(w)$ and $U_i \cap A_i = \varnothing$. Then it follows that $\bigcap_{i=1}^n U_i \cap A_i = \left(\bigcap_{i=1}^n U_i\right)\cap \left(\bigcap_{i=1}^n A_i\right) = \varnothing$. Since the finite intersection of c.e. open sets is open, $\left(\bigcap_{i=1}^n U_i\right)$ is a c.e. open set which is a subset of $\nu(w)$ such that its intersection with $\left(\bigcap_{i=1}^n U_i\right)$ is empty. This procedure is uniform in $w$, hence $\left(\bigcap_{i=1}^n A_i\right)$ is an effective nowhere dense set. 
		\item Let $A$ be effectively nowhere dense set and $w\in\Sigma^*$. Let $B \subseteq \nu(w)$ be a c.e. open set which is contained in $A^c$. Since $B$ is open, it is also contained in $(\overline{A})^c$. 
	\end{enumerate}
\end{proof}

\begin{lemma}\label{lem:cedensecomp}
	The complement of a dense c.e. open set is effective nowhere dense. 
\end{lemma}
\begin{proof}
	Let $A\subseteq X$ be a dense c.e. open set. We can express $A=\bigcup_{i\in\mathcal{I}}A_i$, where $\{A_i\}_{i\in\mathcal{I}}\subseteq\mathcal{P}(X)$ is a sequence of basic open sets in $X$. Since $A$ is dense in $X$, for every basic open set $U\subseteq X$, $A\cap U\ne\varnothing$. Also, since $A$ is open in $X$, $A\cap U$ is also open in $X$. Thus, we can enumerate a basic open set $V_A\subseteq X$ such that $V_A\subsetneq A\cap U$.	Therefore, $V_A\subseteq U$ and $V_A\cap\overline{A^c}=\varnothing$. Since	this holds for every such $U$, by definition, $A^c$ is effective	nowhere dense in $X$. 
\end{proof}

\begin{lemma}\label{lem:nowhere_dense_complement}
	The complement of an effective nowhere dense set contains a dense c.e.  open set. 
\end{lemma}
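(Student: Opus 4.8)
The plan is to exhibit an explicit dense c.e. open set contained in $A^c$, built directly from the witnessing function supplied by the definition of effective nowhere density. Since $A$ is effectively nowhere dense, there is a computable $f:\Sigma^*\to\Sigma^*$ with $\nu(f(w))\subseteq(\nu(w)\setminus A)^o$ for every $w\in\text{dom}(\nu)$. I would set
\[
  G \;=\; \bigcup_{w\in\text{dom}(\nu)}\nu(f(w)),
\]
and claim that $G$ is a dense c.e. open subset of $A^c$, which is exactly what the statement asks for.

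First I would verify that $G$ is c.e. open. Because $\text{dom}(\nu)$ is a c.e.\ set and $f$ is (total) computable, the family $\{f(w):w\in\text{dom}(\nu)\}$ can be enumerated by dovetailing over $\text{dom}(\nu)$ and applying $f$; each $f(w)$ is a $\nu$-name of a basic open set, so $G$ is a c.e.\ union of basic open sets, hence c.e.\ open. Here one uses that basic open sets are nonempty: the containment $\nu(f(w))\subseteq(\nu(w)\setminus A)^o$ only makes sense when $f(w)\in\text{dom}(\nu)$, and since $\nu(f(w))\ne\varnothing$ this also forces $(\nu(w)\setminus A)^o\ne\varnothing$. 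Next I would check the inclusion $G\subseteq A^c$: for each $w\in\text{dom}(\nu)$ we have $\nu(f(w))\subseteq(\nu(w)\setminus A)^o\subseteq\nu(w)\setminus A\subseteq A^c$, so the whole union lies in $A^c$.

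Finally I would establish density. It suffices to show that $G$ meets every nonempty basic open set, since every nonempty open set contains one. Given any $v\in\text{dom}(\nu)$, the set $\nu(f(v))$ is a nonempty basic open set satisfying $\nu(f(v))\subseteq\nu(v)$ and $\nu(f(v))\subseteq G$, whence $G\cap\nu(v)\supseteq\nu(f(v))\ne\varnothing$. Therefore $\overline{G}=X$, and $G$ is the desired dense c.e.\ open subset of $A^c$.

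The proof is essentially a direct unwinding of the definitions, so I do not anticipate a serious obstacle; the one point I would be careful to justify is that each $\nu(f(w))$ is a \emph{genuine}, nonempty basic open set (i.e.\ $f(w)\in\text{dom}(\nu)$), since this single fact is what simultaneously guarantees that $G$ is a legitimate c.e.\ open set and that density goes through. Once that is pinned down, the inclusions $G\subseteq A^c$ and $\overline{G}=X$ follow immediately.
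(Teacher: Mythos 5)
Your proof is correct and follows essentially the same route as the paper's: both take the witnessing function $f$ and show that $\bigcup_{w}\nu(f(w))$ is a dense c.e.\ open subset of $A^c$. Your version is slightly more explicit about the inclusion $\nu(f(w))\subseteq\nu(w)$ driving density and about $f(w)$ landing in $\mathrm{dom}(\nu)$, but the construction and the argument are the same.
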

\begin{proof}
	Let $A\subseteq X$ be an effective nowhere dense set, and let $f:\Sigma^* \to \Sigma^*$ be a computable function witnessing that $A$ is nowhere dense. By definition, we know that for any $w \in \Sigma^*$, the non-empty set $\nu(f(w))$ is a non-empty basic open set contained in $(\nu(w)\setminus A)^o$. Since this interior is non-empty for every $w \in \Sigma^*$, it follows that $\bigcup_{w \in \Sigma^*} \nu(f(w))$ is dense. Further, we have	that $\bigcup_{w \in \Sigma^*} \nu(f(w))$ is a computably enumerable union of non-empty open sets, hence is c.e. open. Thus $A^c$ contains $\bigcup_{w \in \Sigma^*} \nu(f(w))$, a dense, c.e. open set in $X$. 
\end{proof}

\begin{definition}[Effective First Category Set]
	A set is said to be \emph{of effective first category}, if it can be represented as a c.e. union of effective nowhere dense sets. 
\end{definition}

A set of effective first category is also called an \textit{effective meager set}. Sets which are not of effective first category are called sets of \textit{effective second category}.

\section{Effective Banach-Mazur Games}\label{sec:effbmgame}
We now describe the classical Banach-Mazur game \cite{Oxtoby1957}. The goal of the game is to show that a particular set is of first category. The original game was defined on the real line and later generalized. We mention the general setting considered by Oxtoby \cite{Oxtoby1957}. Two players, denoted $P_1$ and $P_2$, take turns picking sets, in order to show that a designated set is of first category. 

Consider the parent space $(X,\tau,\beta,\nu)$. The game is denoted as $BM \langle M,C \rangle$, where $M$ and $C$ are disjoint, and $M\cup C=X$. There are 2 players, denoted $P_1$ and $P_2$. $M$ is the the \textit{target set} for $P_1$, and $C$ for $P_2$. The game specifies a class $\mathcal{G}$ of sets with non-empty interior and such that every non-empty open set contains some set from $\mathcal{G}$. At every turn, the players are supposed to choose sets from this class. The game starts with $P_1$ choosing a set $G_1\in\mathcal{G}$, followed by $P_2$ choosing a set $G_2\subseteq G_1$, $G_2\in\mathcal{G}$, and so on. At the $n^{th}$ move of the corresponding player, $P_1$ chooses a set $G_{2n-1}\subseteq G_{2n-2}$, $G_{2n-1}\in\mathcal{G}$ and $P_2$ chooses a set $G_{2n}\subseteq G_{2n-1}$, $G_{2n}\in\mathcal{G}$. $P_1$ wins the game if $M\cap\bigcap_{n\geq1}G_n\ne\varnothing$. Else, $P_2$ wins \cite{Oxtoby1957}. 

There are two distinct results about the game. The first, more general, version shows that the set $M$ is of first category if, and only if, $P_2$ has a winning strategy. We introduce here the effective version of this game.

\subsection{The Effective Banach-Mazur Game (Version 1)}
We introduce the relevant notions for the effectivization as and when they are required. Along the way, we also justify the necessity for using these notions over the ones previously defined. 

\begin{definition}[Strongly Computable $T_0$ Space]
	A \emph{strongly computable $T_0$ space} $(X,\tau,\beta,\nu)$ is a computable $T_0$ space such that for all $u,v\in dom(\nu)$, the operation $\nu(u)\cap\nu(v)=\varnothing$ is decidable. 
\end{definition}

In a strongly computable $T_0$ space, the disjointness, inclusion, and intersection of basic open sets in the respective space become computably enumerable. 

Now, to effectivize the game, we take a strongly computable space $(X,\tau,\beta,\nu)$ as the parent space. We also impose computational restrictions on one of the players. In the first version, we assume $P_1$ to have unbounded computational resources while picking from the collection $\mathcal{G}$. $P_2$, on the other hand, can only have an effective strategy. An effective strategy entails the computation of the response set in an unbounded finite time via a computable function. 

\begin{definition}[Effective strategy for $P_2$]
	An effective strategy for the second player is denoted by $\mathcal{G}^{(2)} = \{G_{2k}:G_{2k}=\nu(f_k\left(\nu^{-1}(G_1),\nu^{-1}(G_2),\ldots,\nu^{-1}(G_{2k-1})\right))\}_{k\geq1}\subseteq \mathcal{G}$, where $\{f_n\mid f_n:(\Sigma^*)^{2n-1}\to\Sigma^*\}_{n\geq1}$ is a uniformly computable (in $n$) sequence of computable choice functions, where, for each $i\in\N$, $G_i\in\mathcal{G}^{(2)}$. 
\end{definition}

Uniform computable here refers to a single Turing functional computing every bit of the output. Note that the family $\{f_n\}_{n\geq1}$ is a uniformly computable family of functions. By definition, each $f_i\in\{f_n\}_{n\geq1}$ computes a basic open set. For simplicity, we can identify each $i\in\mathbb{N}$ with the basic open set $G_i\in\mathcal{G}^{(2)}$ that it represents. 

\textbf{Note.} In the case of a strongly computable $T_0$ space, $\mathcal{G}$ consists of basic open sets of the space. At any given stage, the possible choices (from the class $\mathcal{G}$) for any player can be assumed to be computably enumerable. In other words, at stage $k$ of the game, the class of sets from which $P_2$, for instance, picks the set to be played, is a c.e. family of basic open sets. 

Since $P_2$ can only play an effective strategy, operations like unions and intersections of basic open sets of the space are permitted (by virtue of the parent space being a strongly computable $T_0$ space). 

\subsubsection{Characterization of Effective First Category Sets}

The classical Banach-Mazur game yields a characterization of sets of first category \cite{Oxtoby1957}. Here we show that the effective version of the game yields a characterization of effective first category sets. 

The classical proofs (for example, see Oxtoby \cite{Oxtoby1957} and Oxtoby \cite{Oxtoby1980}) use Zorn's lemma to establish one of the implications. Since we deal with effective strategies and effective first category sets, we cannot appeal to existential arguments. One of the important contributions of the following proof is to provide an explicitly constructive argument, avoiding appeals to the axiom of choice, or to its equivalent formulations like Zorn's lemma or the Hausdorff maximum principle. 

\begin{theorem}\label{thm:effbm1}
	In a strongly computable $T_0$ space $(X,\tau,\beta,\nu)$ with $M\coprod C=X$, the Banach-Mazur game $BM\langle M,C\rangle$ has an effective winning strategy for $P_2$ if, and only if, $M$ is an effective first category set in $X$. 
\end{theorem}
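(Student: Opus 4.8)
The plan is to prove the two implications separately, in both cases making the classical argument effective by exploiting the decidability and enumerability afforded by the strongly computable $T_0$ space.

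\textbf{Direction 1: effective winning strategy for $P_2$ implies $M$ is effective first category.} Suppose $P_2$ has an effective strategy given by a uniformly computable family $\{f_n\}_{n\geq 1}$. The classical idea is to build a meager cover of $M$ from the strategy by considering, at each stage, \emph{all} possible legal moves of $P_1$ and recording $P_2$'s enforced responses. First I would organize the possible partial plays into a tree: at each node sits a finite sequence of basic open sets $(G_1,\ldots,G_{2k-1})$ that is a legal play prefix ending with a $P_1$ move, and $P_2$'s response $G_{2k}=\nu(f_k(\nu^{-1}(G_1),\ldots,\nu^{-1}(G_{2k-1})))$. Because the space is strongly computable, the relation ``$\nu(v)\subseteq\nu(u)$'' is decidable (it reduces to the c.e.\ disjointness/intersection predicates), so the set of legal $P_1$ moves extending a given prefix is c.e.; dovetailing over these and applying the computable $f_k$ makes the whole tree computably enumerable. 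For each stage $k$ I would form $U_k = \bigcup \{G_{2k} : G_{2k}\text{ is a }P_2\text{ response at depth }2k \text{ in the tree}\}$, a c.e.\ union of basic open sets and hence c.e.\ open. The key claim is that each $U_k$ is dense: given any basic open set $W$, $P_1$ may legally descend inside $W$, forcing a $P_2$ response contained in $W$, so $U_k\cap W\neq\varnothing$, and this witness is found effectively. Then $A_k := U_k^c$ is effectively nowhere dense by \cref{lem:cedensecomp}, and I would show $M \subseteq \bigcup_k A_k$: a point of $M$ lying in every $U_k$ would yield an infinite legal play whose nested intersection meets $M$, contradicting that $P_2$'s strategy is winning. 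Thus $M=\bigcup_k A_k$ modulo the complement computation is a c.e.\ union of effective nowhere dense sets, i.e.\ effectively first category. The uniformity in $k$ (a single functional generating the $A_k$) is what makes the union c.e.\ rather than merely countable.

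\textbf{Direction 2: $M$ effective first category implies $P_2$ has an effective winning strategy.} Write $M=\bigcup_{k\geq 1} N_k$ with each $N_k$ effectively nowhere dense, witnessed by a computable $g_k:\Sigma^*\to\Sigma^*$ with $\nu(g_k(w))\subseteq(\nu(w)\setminus N_k)^o$, uniformly in $k$. I would let $P_2$, on its $k$-th move facing $P_1$'s set $G_{2k-1}$, play $G_{2k}=\nu(g_k(\nu^{-1}(G_{2k-1})))$, a legal move (a basic open subset of $G_{2k-1}$) that is disjoint from $N_k$. This is exactly an effective strategy in the sense of the definition, since $g_k$ is computable uniformly in $k$ and each response depends computably on a name of $G_{2k-1}$. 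To verify it wins: any resulting play produces $\bigcap_{n\geq 1}G_n$, and since $G_{2k}\cap N_k=\varnothing$ we get $\bigcap_n G_n \cap N_k=\varnothing$ for every $k$, hence $\bigcap_n G_n \cap M = \varnothing$, so $P_1$ loses. This direction is the constructive substitute for the classical Zorn's-lemma argument: rather than extracting a strategy by an existence principle, I read it directly off the nowhere-density witnesses.

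The main obstacle will be Direction~1, specifically making the passage from the strategy to the meager cover genuinely effective and uniform. The classical proof quietly quantifies over the uncountable collection of $P_1$'s possible plays and uses choice to assemble the cover; here I must instead enumerate a sufficient set of legal $P_1$ continuations and argue that the resulting c.e.\ open sets $U_k$ are simultaneously dense. The delicate points are (i) confirming that ``legal move of $P_1$ inside a given set'' is an enumerable predicate using only strong computability, so the tree of plays is c.e., and (ii) establishing the density of each $U_k$ effectively, i.e.\ exhibiting for every basic open $W$ a computable witness inside $U_k\cap W$, which is where I expect to lean hardest on decidable inclusion of basic open sets. Handling the complement step—turning the dense c.e.\ open $U_k$ into an effective nowhere dense $A_k$—is routine via \cref{lem:cedensecomp}, but I would take care that the inclusion $M\subseteq\bigcup_k A_k$ is proved by contraposition through an actual infinite play, so that the winning condition of $P_2$ is what drives the containment.
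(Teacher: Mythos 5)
Your second direction ($M$ of effective first category implies $P_2$ has an effective winning strategy) is correct and is essentially the paper's argument; if anything it is slightly cleaner, since you apply the nowhere-density witnesses $g_k$ directly to get $G_{2k}\subseteq G_{2k-1}\setminus N_k$, where the paper passes through closures and enumerates the c.e.\ open set $G_{2k-1}\setminus\overline{M_k}$. Both arguments rest on the same uniformity assumption on the witnesses, which you correctly flag.

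The genuine gap is in your first direction, at the step $\bigcap_k U_k\subseteq C$ (equivalently $M\subseteq\bigcup_k A_k$). Your $U_k$ is the union of $P_2$'s responses over \emph{all} legal plays of length $2k$. A point $x\in\bigcap_k U_k$ only gives, for each $k$ separately, some play prefix whose stage-$k$ response contains $x$; these prefixes need not be nested or even share a first move, and since the tree of plays is infinitely branching you cannot invoke K\"onig's lemma to assemble them into a single infinite play. In fact the claim is false as stated. Take $X=(0,1)$ with the rational-interval basis and $M=\{1/2\}$. Let $P_2$'s strategy compute $N=\ell(w_1)$ from the name $w_1$ of $P_1$'s first move, play a basic interval containing $1/2$ (whenever $P_1$'s current move contains $1/2$) at every stage $k<N$, and play a basic subinterval avoiding $1/2$ at every stage $k\geq N$. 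This is a computable winning strategy, yet for every $k$ there is a legal play---begin with an interval containing $1/2$ whose name is longer than $k$, and let $P_1$ echo $P_2$'s responses---whose stage-$k$ response contains $1/2$; hence $1/2\in M\cap\bigcap_k U_k$. This failure is precisely what the paper's construction is built to avoid: for each $n$ it extracts a \emph{maximal pairwise disjoint} subfamily $\mathcal{H}_n$ of the stage-$n$ response sets (this is where the decidable disjointness of a \emph{strongly} computable space is actually consumed), so that a point of $\bigcap_n\bigcup\mathcal{H}_n$ lies in a unique member at each level, and the continuation ordering on chains can then force the witnessing plays to be nested, after which the winning condition applies; density of $\bigcup\mathcal{H}_n$ comes from maximality rather than for free. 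Your construction discards exactly this structure, and the fact that density of your $U_k$ is trivial is a symptom of that loss. (A smaller inaccuracy: strong computability gives decidable \emph{disjointness}; inclusion of basic open sets is only c.e., which---as you note---still suffices to enumerate the tree of plays.)
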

\begin{proof}
	Let $(X,\tau,\beta,\nu)$ be a strongly computable $T_0$ space, and $M\subseteq X$ be an effective first category set. Thus,	$M=\bigcup_{n\geq1}M_n$, a c.e. union of a sequence $\{M_n\}_{n\geq1}$ of effective nowhere dense sets in $X$. Both players must choose from the class $\mathcal{G}$ of basic open sets.	We now describe an effective winning strategy for $P_2$. 
	
	At stage $k$ of the game, let $P_1$'s choice be $G_{2k-1}\in\mathcal{G}$, where $G_{2k-1}$ is a basic open set. Consider the set $G_{2k-1}\setminus\overline{M_k}$. We show that it is a c.e. open set. By \cref{lem:effective_nowhere_dense}, we have	that $\overline{M_k}$ is an effective nowhere dense set. Its complement, therefore, contains a dense c.e. open set. The intersection of two c.e. open sets is c.e. open, hence $G_{2k-1}\setminus\overline{M_k}$ is a c.e. open set, which is a c.e. union of basic open sets. Computably enumerate the basic open sets which constitute $G_{2k-1} \setminus\overline{M_k}$, and let $G$ be the first basic open set in this enumeration. Clearly, $G \in \mathcal{G}$. $P_2$ plays the set $G_{2k}=G$. We now show that this is a winning strategy for $P_2$. 
	
	Now,
	\begin{align*}
		\bigcap_{n\geq1}G_n &=\bigcap_{k\ge1}G_{2k-1}\cap G_{2k}\\
		&\subseteq \bigcap_{k\ge1}G_{2k-1}\cap\left(G_{2k-1}\setminus \overline{M_k}\right)&\\
		&=\bigcap_{k\ge1} G_{2k-1}\mathbin{\big\backslash} \bigcup_{k\geq1}
		\overline{M_k}.
	\end{align*}
	Hence,
	\begin{align*}
		M\bigcap\left(\bigcap_{n\geq 1}G_n\right)
		&\subseteq M\bigcap \left(\bigcap_{n \ge 1} G_n \cap \overline{M_n}^c\right)\\
		&\subseteq M\bigcap \left(\bigcap_{n \ge 1} G_n
		\cap M^c\right)\\
		&=\varnothing,
	\end{align*}
	where the second subset relationship follows since $M=\bigcup_{n\in\N}M_n \subseteq \bigcup_{n\in\N} \overline{M_n}$,	implying that $M^c \supseteq \bigcap_{n\in\N} \overline{M_n}^c$. Thus, if $M$ is an effective first category set, this is an effective winning strategy for $P_2$. 
	
	Conversely, let $P_2$ have an effective winning strategy denoted by $\mathcal{G}^{(2)} =\{G_{2k}:G_{2k} = \nu(f_k\left(\nu^{-1}(G_1),\nu^{-1}(G_2), \ldots, \nu^{-1}(G_{2k-1})\right))\}_{k\geq1}\subseteq\mathcal{G}$ where $\forall j\geq1$, $G_{2j}\subseteq G_{2j-1}$ is a member of the class $\mathcal{G}$. At any stage $n$, consider the sequence of sets $G_1 \supseteq G_2 \supseteq \dots \supseteq G_{2n}$, where for $i\in\{1,2,\ldots,n\}$, we have $\nu(f_i\left(\nu^{-1}(G_1),\ldots,\nu^{-1}(G_{2i-1})\right))=G_{2i}$ according to the strategy of $P_2$. We call this descending sequence of sets, an $n$-chain (Note that each instance of the game, corresponding to the choices made by $P_1$ and $P_2$, leads to a distinct chain). The set $G_{2n}$ is designated as the top of the chain. An $(n+k)$-chain is a \emph{continuation} of an $n$-chain if the first $2n$ sets in this chain are the same as in the $n$-chain. Then continuation forms a partial ordering among the collections of all possible chains. Note also that since $\mathcal{G}$ is computably enumerable, the collection of $n$-chains is c.e. uniformly in $n$. 
	
	For $n\ge1$, we now construct a maximal c.e. family $\mathcal{H}_n$ of basic open sets such that their union is dense in $X$. Let $\{C^{(n)}_i: i\in\N\}$ be the computable enumeration of $n$-chains, where each $C^{(n)}_i$ consists of $2n$ nested basic open sets (denoting a possible play of the game up to stage $n$). Initially pick $G_{1}=\nu(f_n(\nu^{-1}(\{C^{(n)}_1\})))$ (by slight abuse of notation) and add it to $\mathcal{H}_n$. At any stage $k> 1$ of construction of $\mathcal{H}_n$, suppose $\mathcal{H}_n$ be a finite collection of basic open sets which have been selected using the chains $C^{(n)}_1$, $\dots$, $C^{(n)}_{k-1}$. Now, from chains $C^{(n)}_j$, $j\geq k$, from among the topmost basic open sets $G_{2n,j}$ of each chain $C_j^{(n)}$, pick the set $G_{2n,j}$ with the least $j$, which is disjoint from any of the sets currently in $\mathcal{H}_n$. Add this set into the collection $\mathcal{H}_n$. Since there are only at most $k$ sets in $\mathcal{H}_n$ up to stage $k$, and disjointness of basic open sets is decidable in a strongly computable $T_0$ space, this step is computable. 
	
	By construction, $\mathcal{H}_n$ is a maximal family of disjoint collection of basic open sets within $X$. By maximality, $\bigcup\mathcal{H}_n$ is a dense open set. Since for every basic open set $B$, we can computably enumerate a member $\bigcup\mathcal{H}_n$ which is contained in $B$, it follows that $\bigcup\mathcal{H}_n$ is an \emph{effectively} dense c.e. open set. 
	
	Consider the set $G=\bigcap_{n\geq1}\bigcup\mathcal{H}_n$. Since $f_i$ is part of the winning strategy for $P_2$, we have $\bigcap_{n\geq1}\bigcup\mathcal{H}_n \subseteq C$. This is a c.e. intersection of dense c.e. open sets. Hence $G^c\supseteq M$ is of effective first category in $X$. Hence $M$ is of effective first category. 
\end{proof}

\textbf{Remarks.} 
\begin{itemize}
	\item The parent space is required to be a strongly computable $T_0$ space. Working with just a computable $T_0$ space is not sufficient, since we need to check for disjointness. 
	\item The class $\mathcal{G}$ of playable sets is essentially the basis of the computable $T_0$ space under consideration. Though this may seem restrictive, it leads to the characterization of effective first category sets. Note that even the choices in the classical game are restricted. For instance, the initial version of the game (mentioned in	Oxtoby \cite{Oxtoby1980}) requires players to pick a closed interval of the real line, not any arbitrary closed set. 
\end{itemize}

\subsection{The Effective Banach-Mazur Game (Version 2)}
We saw that the above game acts as a characterization of effective first category sets. One might wonder under what conditions could $P_1$ win. The following theorem establishes that if the complement of $P_1$'s target set is of effective first category at some point $x$ of the parent space, then $P_1$ has a winning strategy. 

\begin{definition}[Effective first category (set) at a point]
	For a strongly computable $T_0$ space $(X,\tau,\beta,\nu)$, the set $A\subseteq X$ is said to be of effective first category at a point $x\in X$ if there is some non-empty neighborhood $N_x\subseteq X$ of $x$ such that $N_x\cap A$ is of effective first category in $X$. 
\end{definition}

In this version of the game, we assume $P_2$ to have unbounded computational resources while picking from the collection $\mathcal{G}$. $P_1$, on the other hand, can only have a computable strategy. The respective winning criterion remains the same as before. 

\begin{definition}[Effective strategy for $P_1$]
	An effective strategy for the first player is denoted by $\mathcal{G}^{(1)} = \{G_{2k-1}:G_{2k-1} = \nu(f_k\left(\nu^{-1}(G_1),\nu^{-1}(G_2),\ldots,\nu^{-1}(G_{2k-2})\right))\}_{k\geq1}\subseteq\mathcal{G}$, $\{f_n\mid f_n:(\Sigma^*)^{2n-2}\to\Sigma^*\}_{n\geq1}$ is a c.e. sequence of computable choice functions uniform over $n$, with each $n\in\mathbb{N}$ corresponding to the basic open set $G_n\in\mathcal{G}^{(1)}$. 
\end{definition}

Recall that a set is of first category at a point if it is of first category at an open neighborhood of that point. We remark that this condition is much weaker than being an effective first category set, which was the requirement in the last game. Hence, we need the parent space, in addition to being strongly computable and $T_0$, to have some more properties. We can not work with simply a strongly computable $T_0$ space here, unlike earlier, for there is no notion of convergence at a point, something that qualifies as a winning criterion for $P_1$. We need the parent space to at least be equipped with a metric, owing to which we can quantify the convergence at every stage of the game. We also require the space to be complete. 

Recall the notion of a computable metric space. The following allied notions are required in the current version of the game. 
\begin{definition}[Computable Metric Space]
	A computable metric space is a tuple $(X,\rho,W,\nu)$, where $(X,\rho)$ is a metric space, $\nu$ is a representation of the	parent space, and $W=\{w_i\}_{i\geq1}\subseteq\Sigma^*$ is a sequence of points with the property that $\{\nu(w_i)\}_{i\geq1}$ is dense in $(X,\rho)$, such that for all $i,j\in\mathbb{N}$, $\rho(\nu(w_i),\nu(w_j))$ is computable. 
\end{definition}
\begin{definition}[Complete Metric Space]
	A metric space $(X,\rho)$ is said to be complete if every Cauchy sequence converges. In other words, for a Cauchy	sequence $\{x_n\}_{n\geq1}\subseteq(X,\rho)$, there exists an $x\in X$ such that $x_n\rightarrow x$. 
\end{definition}
\begin{lemma}[Cantor]\label{lem:cantor}
	Let $(X,\rho)$ be a complete metric space. For a decreasing sequence $F_1\supseteq F_2\supseteq \cdots$ of non empty closed subsets of $X$ such that $diam\ F_n\rightarrow0$, there is an $x\in X$ such that $\bigcap_{n\geq1}F_n=\{x\}$. 
\end{lemma}

For computable complete metric spaces, Yasugi, Mori, and Tsujii \cite{YMT99} and independently Brattka \cite{Brattka2001} have effectivized the Baire category theorem. 

We also require the notion of convergence of sets to an interior point. The following property of a computable metric space is useful in this regard.
\begin{lemma}\label{lem:propersubset}
	Let $(X,\rho,W,\nu)$ be a computable metric space. Then, for every non-empty c.e. open set $U\subseteq X$, there exists a non-empty basic open set $V\subseteq X$ such that $\overline{V}\subsetneq U$. 
\end{lemma}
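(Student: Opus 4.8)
The plan is to reduce everything to the canonical basis of a computable metric space, namely the rational balls $B(\nu(w_i),q)=\{x\in X:\rho(\nu(w_i),x)<q\}$ with $w_i\in W$ and $q\in\Q^{>0}$, and then to shrink the radius. First I would use the hypothesis that $U$ is c.e.\ open: by definition this means $U$ is a computably enumerable union of basic open sets, each contained in $U$. Since $U\neq\varnothing$, this enumeration is nonempty, so I can compute a name of a single basic ball $B(\nu(w_i),q)\subseteq U$ with $q\in\Q^{>0}$ rational. This selection is effective in a name for $U$, since it only requires running the enumeration until the first basic set appears.

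Next I would take $V$ to be the concentric ball of half the radius, $V=B(\nu(w_i),q/2)$, which is again a basic open set with a computable name (halving a rational is computable). It is nonempty, as $\nu(w_i)\in V$. The inclusion $\overline{V}\subseteq U$ is then purely metric and requires no case analysis: the map $x\mapsto\rho(\nu(w_i),x)$ is continuous, so the closed ball $\{x:\rho(\nu(w_i),x)\le q/2\}$ is closed and contains $V$, whence $\overline{V}\subseteq\{x:\rho(\nu(w_i),x)\le q/2\}$; and the triangle inequality gives $\{x:\rho(\nu(w_i),x)\le q/2\}\subseteq B(\nu(w_i),q)\subseteq U$, because $\rho(\nu(w_i),x)\le q/2<q$. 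Thus $\overline{V}\subseteq U$ holds unconditionally.

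The main obstacle is the \emph{strict} inclusion $\overline{V}\subsetneq U$, that is, exhibiting a point of $U$ lying outside $\overline{V}$. The chain above only gives $\overline{V}\subseteq\{x:\rho(\nu(w_i),x)\le q/2\}\subseteq B(\nu(w_i),q)\subseteq U$, and the middle inclusion can fail to be strict for pathological metrics (for instance at an isolated point, where no point sits at distance in $(q/2,q)$ from $\nu(w_i)$). In the spaces to which the lemma is applied -- complete, perfect spaces, in particular bounded open regions of $\R^n$ -- every ball contains points at all intermediate radii, so strictness is immediate; more robustly, were $\overline{V}=U$ to hold then $U$ would be clopen, and one can rule this out by instead fixing $x\in U$ and a rational $\epsilon$ with $B(x,\epsilon)\subseteq U$, centering a ball of radius $\epsilon/4$ at a dense point $\nu(w_i)$ within $\epsilon/4$ of $x$ so that $\overline{V}\subseteq B(x,\epsilon/2)$, and then using the absence of isolated points to produce a point of $U$ at distance in $(\epsilon/2,\epsilon)$ from $x$. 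In all cases the locating of the ball, the halving of the radius, and the naming of $V$ are computable operations, so the witness $V$ is produced effectively from a name of $U$.
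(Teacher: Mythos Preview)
Your approach is essentially identical to the paper's: enumerate a first basic ball $B_\rho(\alpha_1,r_1)\subseteq U$ and set $V=B_\rho(\alpha_1,r_1/2)$, noting $\alpha_1\in V$ and $\overline{V}\subseteq\{x:\rho(\alpha_1,x)\le r_1/2\}\subseteq B_\rho(\alpha_1,r_1)\subseteq U$. The paper is terser and in fact less careful: it writes $\overline{B_\rho(\alpha_1,r_1/2)}=\{x:\rho(\alpha_1,x)\le r_1/2\}\subsetneq B_\rho(\alpha_1,r_1)$ as an equality and a strict inclusion without justification, whereas you correctly point out that in an arbitrary metric space only the containment $\overline{V}\subseteq\{x:\rho(\alpha_1,x)\le r_1/2\}$ is guaranteed and that the strictness $\overline{V}\subsetneq U$ can fail at isolated points. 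So your argument matches the paper's and your caveat about strictness flags a genuine gap that the paper's own proof does not address; your remark that the intended downstream setting (complete perfect spaces, ultimately bounded open regions of $\R^n$) supplies the missing strictness is exactly the right patch.
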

\begin{proof}
	Let $U\subseteq X$ be a c.e. union of basic open balls $B_\rho(\alpha_i,r_i)$, where $B_\rho(\alpha,r)=\{x\in X:\rho(\alpha,x)<r\}$ and $r_i\in\mathbb{Q}$, $\alpha_i\in X$, $i\in\mathbb{N}$. Then $\overline{B_\rho(\alpha_1,\frac{r_1}{2})}=\{x\in X:\rho(\alpha_1,x)\leq\frac{r_1}{2}\}$ is such that $\varnothing\ne\{\alpha_i\} \subseteq \overline{B_\rho(\alpha_1,\frac{r_1}{2})}\subsetneq        B_\rho(\alpha_1,r_1)$. This is the required set. 
\end{proof}
Now, we are ready to discuss a situation wherein $P_1$ wins the effective Banach-Mazur game. 
\begin{theorem}\label{thm:effbm2}
	For a complete computable metric space $(X,\rho,Y,\nu)$ with $M\coprod C=X$, the Banach-Mazur game $BM\langle C,M\rangle$ has an effective winning strategy for $P_1$ if, and only if, $M$ is of effective first category at some point in $X$.
\end{theorem}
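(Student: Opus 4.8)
The plan is to prove both implications, mirroring the structure of the converse of \cref{thm:effbm1} but with the roles of the two players and of the two target sets interchanged. For the easy direction, suppose $M$ is of effective first category at a point $x$, witnessed by a neighborhood $N_x$ with $N_x\cap M=\bigcup_{k\ge1}M_k$ a c.e.\ union of effective nowhere dense sets. I would equip $P_1$ with the following computable strategy. First, using \cref{lem:propersubset}, $P_1$ plays a basic open set $G_1$ with $\overline{G_1}\subseteq N_x$; at the $k$-th move, given $P_2$'s last basic open set $G_{2k-2}$, $P_1$ uses that $\overline{M_k}$ is effective nowhere dense (\cref{lem:effective_nowhere_dense}) to compute a basic open $G_{2k-1}\subseteq G_{2k-2}$ disjoint from $\overline{M_k}$, then shrinks it (again via \cref{lem:propersubset} and the dense sequence of the metric space) so that $\overline{G_{2k-1}}\subseteq G_{2k-2}$ and $\operatorname{diam}G_{2k-1}<1/k$. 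The closures $\overline{G_1}\supseteq\overline{G_3}\supseteq\cdots$ are then nested, nonempty, closed, with diameters tending to $0$, so by \cref{lem:cantor} their intersection is a single point $x^\ast$, which equals $\bigcap_{n\ge1}G_n$. Since $x^\ast\in N_x$ and $x^\ast\notin\bigcup_k M_k=N_x\cap M$, we get $x^\ast\in C$, so $P_1$ wins; as every step is computable, this is an effective winning strategy.

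For the converse, suppose $P_1$ has an effective winning strategy $\sigma=\{f_k\}$. Its first move $G_1=\nu(f_1(\lambda))$ is a fixed basic open set, and I claim $M$ is of effective first category at every point of $G_1$, taking $N_x=G_1$. The key construction is a c.e.\ tree of $\sigma$-consistent plays. A node at depth $n$ is a $\sigma$-consistent position $G_1\supseteq\cdots\supseteq G_{2n-1}$ (so $P_1$'s moves are dictated by $\sigma$ while $P_2$'s moves are arbitrary basic open sets), and its \emph{top} is $P_1$'s last set $G_{2n-1}$. I would build the tree level by level: the children of a node with top $T$ are obtained by letting $P_2$ play, inside $T$, basic open balls of diameter $<1/n$, letting $\sigma$ produce $P_1$'s responses, and then extracting a maximal pairwise-disjoint subfamily of these responses, exactly the greedy, dovetailed construction used in \cref{thm:effbm1} (effective by decidability of disjointness of basic open sets and computability of rational ball diameters). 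Writing $D_n$ for the disjoint union of all depth-$n$ tops, maximality together with the fact that every nonempty open subset of a parent top contains such a small ball (hence a child top) shows that each $D_n$ is an effectively dense c.e.\ open subset of $G_1$.

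The winning hypothesis is then used as follows. Because distinct tops at the same depth are disjoint and each child top is contained in its parent, any point $y\in R:=\bigcap_{n\ge1}D_n$ lies, for each $n$, in a unique depth-$n$ top $T_n(y)$, and these are nested, $T_{n+1}(y)\subseteq T_n(y)$; they therefore trace a single infinite branch of the tree, i.e.\ a genuine play in which $P_1$ follows $\sigma$. Since the tops have diameters below $1/n$, \cref{lem:cantor} gives $\bigcap_n T_n(y)=\bigcap_n G_n=\{p\}$ for a single point $p$, and because $\sigma$ wins this play, $C\cap\bigcap_n G_n\ne\varnothing$, forcing $p\in C$; as $y=p$, we conclude $R\subseteq C$. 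Hence $G_1\cap M\subseteq G_1\setminus R=\bigcup_{n}(G_1\setminus D_n)$. Each $G_1\setminus D_n$ is effectively nowhere dense: given a basic open set $\nu(w)$, dovetailing produces a basic open subset that is either disjoint from $G_1$ or contained in the c.e.\ open set $D_n$, and in either case it misses $G_1\setminus D_n$, uniformly in $n$. Since $E_n:=(G_1\setminus D_n)\cap M\subseteq G_1\setminus D_n$ inherits the same witness, and $\bigcup_n E_n=G_1\cap M$, the set $G_1\cap M$ is of effective first category, so $M$ is of effective first category at each point of $G_1$.

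The main obstacle is the converse, and specifically two coupled points: guaranteeing that the depth-$n$ tops have diameters shrinking to $0$ so that each branch of the tree converges (via \cref{lem:cantor}) to a single point rather than to a large residual intersection, which is precisely what upgrades the winning condition ``$\bigcap_n G_n$ meets $C$'' to ``$y\in C$'', and simultaneously keeping the whole tree, each maximal disjoint family of tops, and each $D_n$ computably enumerable and effectively dense. Ensuring both at once forces me to restrict $P_2$'s simulated moves to small rational balls while preserving cofinality (so that maximality still yields density), and to lean on decidable disjointness of basic open sets as in \cref{thm:effbm1}; this interplay is the delicate part I would treat most carefully.
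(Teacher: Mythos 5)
Your proof is correct, and its forward direction is essentially the paper's own (play inside the witnessing neighbourhood, avoid $\overline{M_k}$, shrink diameters, apply \cref{lem:cantor}); but your converse takes a genuinely different route, and in fact a more careful one. The paper settles the converse by a role-swap reduction: after $P_1$'s first move $G_1$ it views the remaining game as $BM\langle G_1\cap M,\, X\setminus (G_1\cap M)\rangle$ with the original $P_1$ now acting as the second player, and invokes \cref{thm:effbm1}. That reduction silently passes over a mismatch of winning conditions: $P_1$'s strategy in $BM\langle C,M\rangle$ only guarantees the \emph{non-emptiness} condition $C\cap\bigcap_{n}G_n\ne\varnothing$, whereas the swapped-role player in the reduced game must guarantee the \emph{emptiness} condition $(G_1\cap M)\cap\bigcap_{n}G_n=\varnothing$; these coincide only when $\bigcap_{n}G_n$ is forced to be a single point, which an arbitrary play need not achieve. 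Your tree construction addresses exactly this: by restricting the simulated $P_2$-moves to balls of diameter $<1/n$, every branch intersection collapses to a singleton, so ``$\bigcap_n G_n$ meets $C$'' upgrades to ``the branch point itself lies in $C$'', giving $\bigcap_n D_n\subseteq C$ and hence the explicit decomposition $G_1\cap M=\bigcup_n E_n$ into effectively nowhere dense sets. This is an effectivization of Oxtoby's classical argument for this direction, and it is the right way to close the gap in the paper's shortcut. What the paper's route buys is brevity---reusing \cref{thm:effbm1} avoids redoing the maximal-disjoint-family construction---at the price of an unjustified step; what yours buys is a complete argument, at the price of repeating that construction and of leaning (as the paper itself implicitly does in the metric setting, where only ``formal'' disjointness and inclusion of rational balls are semi-decidable) on the decidability of disjointness of basic open sets. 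The only small glosses on your side are harmless and within the paper's own level of rigour: you apply \cref{lem:propersubset} to $N_x$, which tacitly assumes $N_x$ may be taken c.e.\ open (replace it by a basic open subset, as the paper does), and \cref{lem:cantor} is not strictly needed in your converse, since the branch point $y$ itself witnesses non-emptiness and the shrinking diameters alone force $\bigcap_n T_n(y)=\{y\}$.
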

The reader should be careful and especially note the changed labels in the theorem statement (and the upcoming proof). The labels indicate the nature of the respective sets. 
\begin{proof}
	Let $(X,\rho,Y,\nu)$ be a complete computable metric space. Let $\mathcal{G}\subseteq\mathcal{P}(X)$ be defined as in the previous version of the game.
	
	Let $M$ be of effective first category at some point $z\in X$. Let $G\in\mathcal{G}$ be such that $z\in G$ and $G\cap M$ is of effective first category in $X$. Therefore, we can write $G\cap M=\bigcup_{n\geq1}M_n$, where  $\{M_n\}_{n\geq1}\subseteq\mathcal{P}(X)$ is a sequence of effective nowhere dense sets in $X$. $P_1$ begins by playing the set $G_1=G\cup M$. Recall that $P_1$'s choices can be computably enumerated. Let $P_2$'s response at stage $k-1$ of the game be $G_{2k-2}\in\mathcal{G}$. Then, towards the next stage of the game, $P_1$ picks, out of the enumeration, the first set $G'\in\mathcal{G}$ with $diam\ G'<\frac{1}{k}$ such that $\overline{G'}\subsetneq G_{2k-2}\setminus\overline{M_k}$. The existence and enumerability of such a set is ensured by \cref{lem:propersubset}. $P_1$ plays the set $G_{2k-1}=G'$. 
	
	Considering the above play of the game, we note that $\bigcap_{n\geq1}G_n\subseteq C$. Since $\overline{G_{2k-1}}\subseteq G_{2k-2}$, we get $\bigcap_{n\geq1}\overline{G_n}=\bigcap_{n\geq1}G_n$. With $diam\ G_{2n-1}<\frac{1}{n}$, by \cref{lem:cantor}, $\bigcap_{n\geq1}\overline{G_n}$ is a singleton, say $y\in C$. Hence $C\cap\bigcap_{n\geq1}G_n\neq\varnothing$, which shows that this is an effective winning strategy for $P_1$. 
	
	Conversely, let $P_1$ have an effective winning strategy $\mathcal{G}^{(1)}\subseteq\mathcal{F}$ denoted as $\mathcal{G}^{(1)}=\{G_{2k-1}:G_{2k-1}=\nu(f_k\left(\nu^{-1}(G_1),\nu^{-1}(G_2),\ldots,\nu^{-1}(G_{2k-2})\right))\}_{k\geq1}\subseteq\mathcal{G}$, where $\{f_n\mid f_n:(\Sigma^*)^{2n-2}\to\Sigma^*\}_{n\geq1}$ is a sequence of computable choice functions. This strategy intersects at a non-empty set. Therefore, there is a basic open set $U\subseteq\bigcap_{n\geq1}G_n$ lying in the intersection. The set $U$ depends on the moves of both the players, and hence, irrespective of $P_1$'s strategy, may be different for every instance of the game. This set contains a point $x\in C$. Now, with $G_1\in\mathcal{G}^{(1)}$ being the first set that $P_1$ plays, it suffices to show that $G_1\cap M$ is of effective first category in $X$. This is because if $M$ is of effective first category at some point $w\in X$, then there is some non-empty neighborhood $N_w\subseteq X$ of $w$ such that $N_w\cap M$ is of effective first category in $X$. We assert that $G_1$ is such a neighborhood. 
	
	Since $P_1$ plays $G_1$ as the first move, the current stage of the game is transformed into $BM\langle G_1\cap M,X\mathbin{\big\backslash}G_1\cap M\rangle$ with the original $P_2$ playing the first move. Since this game has a winning strategy for the current \textit{new} $P_2$, by \cref{thm:effbm1}, $G_1\cap M$ is an effective first category set. 
\end{proof}
At this point we remark that the proof of \cref{thm:effbm2}, as opposed to the proof of \cref{thm:effbm1}, specifically asks for $X$ to be a metric space, since we use the notion of a diminishing sequence of diameters of sets. This is what enables us to use Cantor's lemma.

\subsection{An Application: the complement of the set of Liouville
  numbers}
In this section, we discuss a direct application of the above effective
versions of the Banach-Mazur games.

Joseph Liouville was the first to demonstrate the existence of a
transcendental real number \cite{Liouville1844}, now named Liouville
constant. He showed that this constant has very good Diophantine
approximation, and numbers which have such close Diophantine
approximations are all transcendental. Numbers which obey this
property are now called Liouville numbers.

\begin{definition}[Liouville Number]
  A number $x\in\mathbb{R}\setminus\mathbb{Q}$ is said to be Liouville, if 
  \begin{align*}
    (\forall n\in\mathbb{N})(\exists p,q\in\mathbb{Z})
    \left(q>1 \wedge \left|x-\frac{p}{q}\right| <\frac{1}{q^n} \right). 
  \end{align*}
\end{definition}

Let $E$ denote the set of Liouville numbers. We can write $E$ as 
\begin{align*}
  E \quad=\quad
  \mathbb{R}\setminus\mathbb{Q} \cap
  \bigcap_{n\in\N}\bigcup_{\substack{p,q\in\mathbb{Z}\\q>1}}
  B_\rho\left(\frac{p}{q},\frac{1}{q^n}\right)
  \quad=\quad
  \mathbb{R}\setminus\mathbb{Q}\cap
  \bigcap_{n\in\N}\bigcup_{\substack{p,q\in\mathbb{Z}\\q>1}}
  \left(\frac{p}{q}-\frac{1}{q^n},\frac{p}{q}+\frac{1}{q^n}\right), 
\end{align*}
where $\rho$ is the usual Euclidean metric. The set $E$ has Lebesgue
measure 0 and Hausdorff dimension 0 \cite{Oxtoby1980}. Thus, $E$ is a
very small set in terms of measure and Hausdorff dimension. In fact,
its effective Hausdorff dimension is 0 \cite{Staiger2002}, even though
$E$ contains normal numbers, hence its finite-state dimension is 1
\cite{Kano1993,Nandakumar2016}.

It is natural to consider whether such a set $E$ is topologically
small. Surprisingly, it turns out to be otherwise. we observe that
$E^c$ is effectively meager, hence $E$ is effectively co-meager.

\begin{theorem}
	The set of non-Liouville numbers forms an effective first category set. 
\end{theorem}
\begin{proof}
The complement of the set of Liouville numbers can be written as
\begin{align*}
  E^c&=\mathbb{Q} \cup \bigcup_{n\in\N}
       \left(\bigcup_{\substack{p,q\in\mathbb{Z}\\q>1}}
  \left(\frac{p}{q}-\frac{1}{q^n},\frac{p}{q}+\frac{1}{q^n}\right)\right)^c.  
\end{align*} 
Observe that the set
$\bigcup_{\substack{p,q\in\mathbb{Z}\\q>1}}\left(\frac{p}{q}-\frac{1}{q^n},\frac{p}{q}+\frac{1}{q^n}\right)$ 
is dense in $\mathbb{R}$, since it contains all rationals. Moreover,
it is easily seen to be a c.e. open set. Hence, it is a dense c.e.
open set, and its complement is effective nowhere dense, by
\cref{lem:cedensecomp}.

Now, consider the Banach-Mazur game $BM\langle E,E^c \rangle$.
Consider the following strategy for $P_2$. At stage $k$ of the game,
with $P_1$'s response being $G_{2k-1}\in\mathcal{G}$, $P_2$ plays the
set $G_{2k}\in\mathcal{G}$, where $G_{2k}$ is the first set in
$\mathcal{G}$ such that
\begin{align*}
  G_{2k}&\subseteq G_{2k-1}\mathbin{\big\backslash}\overline{\left(\bigcup_{\substack{p,q\in\mathbb{Z}\\q>1}}\left(\frac{p}{q}-\frac{1}{q^k},\frac{p}{q}+\frac{1}{q^k}\right)\right)^c}\\
	&=G_{2k-1}\cap\left(\overline{\left(\bigcup_{\substack{p,q\in\mathbb{Z}\\q>1}}\left(\frac{p}{q}-\frac{1}{q^k},\frac{p}{q}+\frac{1}{q^k}\right)\right)^c}\right)^c\\
	&=G_{2k-1}\cap\left(\bigcup_{\substack{p,q\in\mathbb{Z}\\q>1}}\left(\frac{p}{q}-\frac{1}{q^k},\frac{p}{q}+\frac{1}{q^k}\right)\right)^o\\
	&=G_{2k-1}\cap\bigcup_{\substack{p,q\in\mathbb{Z}\\q>1}}\left(\frac{p}{q}-\frac{1}{q^k},\frac{p}{q}+\frac{1}{q^k}\right)^o.
\end{align*}
Clearly
$\bigcup_{\substack{p,q\in\mathbb{Z}\\q>1}}\left(\frac{p}{q}-\frac{1}{q^n},\frac{p}{q}+\frac{1}{q^n}\right)^o$
is a non-empty c.e. open set in $\mathbb{R}$. Hence $G_{2k}$ is a
non-empty basic open set of the parent space. $P_2$ plays this set.

Now, observe that
\begin{align*}
  E^c\cap\bigcap G_k&=E^c\bigcap\left(\bigcap G_{2k-1}\cap G_{2k}\right)\\
                    &\subseteq E^c\bigcap\left(\bigcap_{k\in\N}G_{2k-1}\cap\left(G_{2k-1}\mathbin{\big\backslash}\overline{\left(\bigcup_{\substack{p,q\in\mathbb{Z}\\q>1}}\left(\frac{p}{q}-\frac{1}{q^k},\frac{p}{q}+\frac{1}{q^k}\right)\right)^c}\right)\right)\\
                    &=E^c\bigcap\left(\bigcap_{k\in\N}G_{2k-1}\mathbin{\big\backslash}\bigcup_{k\in\N}\overline{\left(\bigcup_{\substack{p,q\in\mathbb{Z}\\q>1}}\left(\frac{p}{q}-\frac{1}{q^k},\frac{p}{q}+\frac{1}{q^k}\right)\right)^c}\right)\\
                    &\subseteq E^c\bigcap\left(\bigcap_{k\in\N}G_{2k-1}\mathbin{\big\backslash}E^c\right)\\
                    &=\varnothing.
\end{align*}
Hence, the above strategy is an effective winning strategy for $P_2$
in the game $BM\langle E^c,\mathbb{R}\setminus E^c\rangle$. Therefore,
by \cref{thm:effbm1}, $E^c$ is an effective first category set.
\end{proof}

\section{Categorization of Sets of Non-recurrent Points}

In this section, we discuss a major application of our effectivization. The effective version of the Banach-Mazur game helps categorize the set of non-recurrent points of a dynamical system. We show that the set of non-recurrent points of any suitably effectivized dynamical system forms a set of effective first category, similar to the category version of the classical Poincar\'e recurrence theorem. 


The Poincar\'{e} recurrence theorem is a pioneering and fundamental result in the theory of dynamical systems \cite{Poincare1899}. It shows that in a deterministic dynamical system which is appropriately bounded, usually expressed in terms of a finite measure, or being topologically bounded, nearly all the points in phase space return infinitely often, arbitrarily close to their initial positions. This behavior prevents most points in the phase space from ``escaping to infinity'' (see, for example, Walters \cite{Walters1982} for the standard measure-theoretic version). This theorem was also influential in the history of physics. Physicists, starting with Boltzmann \cite{Boltzmann1896} and Zermelo \cite{Zermelo1896}, have studied its implication to the second law of thermodynamics.

First, we define the essential notions from dynamical systems which we require. The classical version below is quoted for measure as well as for category. The reader is referred to Oxtoby \cite{Oxtoby1980} for details. The effective measure theoretic Poincar\'{e} theorem is known - it follows from the effective Birkhoff ergodic theorem \cite{Vyugin1997}, \cite{Nandakumar2008}, \cite{Hoyrup2009b}, \cite{Vijayvargiya2014} and the effective Furstenberg multiple recurrence theorem \cite{Downey2019a} (see Furstenberg \cite{Furstenberg1981} for the classical theorem). The effective topological Poincar\'{e} recurrence theorem was introduced in Jindal \cite{Jindal2014}, but has not yet been established for all sets of effective first category. We resolve this issue, showing that the \emph{topological} Poincar\'{e} theorem holds effectively. Our proof uses the game-theoretic characterization of effective first category sets from the previous section. 

\begin{definition}[Recurrence]
	For a space $X\subseteq\mathbb{R}^n$ equipped with a homeomorphism $T$ onto itself, and and open set $G\subseteq X$, a point $x\in G$ is said to be \textit{recurrent with respect to $G$}, if $T^ix\in G$ for infinitely many $i\geq0$. $x$ is said to be \textit{recurrent under $T$}, if for every open $U\ni x$, $x$ is recurrent with respect to $U$. 
\end{definition}

The points which are not recurrent are said to be non-recurrent. 

\begin{definition}[Wandering Set]
	For a space $X$ equipped with a surjective map $T:X\to X$, an open set $E\subseteq X$ is said to be \emph{wandering} if the sets in the sequence $\{T^{-i}E\}_{i\geq0}$ are mutually disjoint.
\end{definition}

We now introduce the computability restrictions on the map required to establish our theorem. 

\begin{definition}[Computable Homeomorphism]
	For two effective $T_0$ spaces $(X_1,\tau_1,\beta_1,\nu_1)$ and $(X_2,\tau_2,\beta_2,\nu_2)$, a homeomorphism $f:(X_1,\tau_1,\beta_1,\nu_1)\to(X_2,\tau_2,\beta_2,\nu_2)$ is said	to be computable, if $\nu_2^{-1}\circ f\circ\nu_1$ is a total computable bijection mapping basic open sets to basic open sets, such that its inverse is also a total computable bijection. 
\end{definition} 

Observe that the image of a c.e. open set under a computable homeomorphism is also c.e. open. 

The classical version of the Poincar\'e recurrence theorem is as follows. 

\begin{theorem}[Poincar\'e Recurrence Theorem]\label{thm:classprt}
	For a bounded open region $X\subseteq\mathbb{R}^n$ equipped with a 	measure preserving homeomorphism $T$ onto itself, all the points of	$X$, except a set of measure zero and first category, are recurrent under $T$.
\end{theorem}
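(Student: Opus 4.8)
The plan is to treat the two conclusions separately, reducing both to a single countable family of ``first-return'' sets. Fix a countable basis $\{B_k\}_{k\ge1}$ of open balls with rational data for the bounded region $X\subseteq\R^n$; since $X$ is bounded, its Lebesgue measure $\mu(X)$ is finite. First I would record the reduction to the basis: a point $x$ is recurrent under $T$ if and only if, for every $k$ with $x\in B_k$, one has $T^ix\in B_k$ for infinitely many $i$. The forward direction is immediate since each $B_k$ is open, and the converse follows because any open $U\ni x$ contains some $B_k\ni x$. Consequently the non-recurrent set is exactly $\bigcup_k A_k$, where $A_k=\{x\in B_k: T^ix\in B_k \text{ for only finitely many } i\}$. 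It therefore suffices to show that each $A_k$ is both null and meager, and to this end I would introduce the ``never-return'' set $W_k=\{x\in B_k: T^ix\notin B_k \text{ for all } i\ge1\}$.

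For the measure part I would run the classical Poincar\'e argument on each $W_k$. If $x\in W_k\cap T^{-m}W_k$ for some $m\ge1$, then $T^mx\in W_k\subseteq B_k$, contradicting $T^mx\notin B_k$; hence the sets $\{T^{-m}W_k\}_{m\ge0}$ are pairwise disjoint. Since $T$ is measure preserving, $\mu(T^{-m}W_k)=\mu(W_k)$ for all $m$, and disjointness together with $\mu(X)<\infty$ forces $\mu(W_k)=0$. A point with only finitely many returns has a last return, say at time $m$, so $T^mx\in W_k$ and thus $x\in T^{-m}W_k$; this gives $A_k\subseteq\bigcup_{m\ge0}T^{-m}W_k$, a countable union of null sets, so $\mu(A_k)=0$ and $\mu(\bigcup_k A_k)=0$.

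For the category part I would first extract the topological hypothesis hidden in ``measure preserving'': a nonempty open wandering set $E$ would have $\mu(E)>0$, its iterates $\{T^{-i}E\}$ would be disjoint of equal positive measure, and their total measure would exceed $\mu(X)$ --- impossible. So $T$ admits no nonempty open wandering set. Now $W_k$ is relatively closed in $B_k$, being $B_k\setminus\bigcup_{n\ge1}T^{-n}B_k$ with the subtracted set open. If $W_k$ had nonempty interior $V$, then $T^nV\cap B_k=\varnothing$ for all $n\ge1$, hence $T^nV\cap V=\varnothing$ for all $n\ge1$, making $V$ a nonempty open wandering set --- a contradiction. Thus $W_k$ is nowhere dense, each $T^{-m}W_k$ is nowhere dense as the homeomorphic image of a nowhere dense set, and $A_k=\bigcup_{m\ge0}T^{-m}W_k$ is meager; the non-recurrent set $\bigcup_k A_k$ is then a countable union of meager sets, hence of first category.

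The main obstacle I anticipate is the category half, specifically the passage from ``$A_k$ is small'' to a clean nowhere-density statement: the naive set $A_{k,N}=\{x\in B_k:T^nx\notin B_k\ \forall n\ge N\}$ only yields $T^nV\cap V=\varnothing$ for $n\ge N$, which does not directly exhibit a wandering set. Routing everything through the never-return set $W_k$ (where the escape is immediate, $n\ge1$) is what makes the wandering-set contradiction go through, and then the last-return decomposition $A_k\subseteq\bigcup_{m\ge0}T^{-m}W_k$ transfers nowhere-density to $A_k$ via the homeomorphism. Verifying that $T^{-m}$ preserves nowhere density and that the basis reduction is faithful are the remaining routine checks.
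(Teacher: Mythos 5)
Your proof is correct. Note, however, that the paper never proves \cref{thm:classprt} itself: it quotes it as a classical result (pointing to Oxtoby) and reserves its own argument for the effective analogue, \cref{thm:effcatprt}, which is proved game-theoretically. Measured against that proof, your route is genuinely different. You decompose the non-recurrent set through the never-return sets $W_k=\{x\in B_k: T^ix\notin B_k\ \forall i\ge1\}$ and recover the finitely-returning sets via the last-return trick $A_k\subseteq\bigcup_{m\ge0}T^{-m}W_k$ (in your category paragraph you write this as an equality, which is not right --- a point of $T^{-m}W_k$ need not lie in $B_k$ --- but only the inclusion is needed, since subsets of meager sets are meager); the paper instead stratifies the non-recurrent points of an open set $E$ as $N(E)=\bigcup_n F_n(E)$ with $F_n(E)=\{x\in E:(\forall j>n)\,T^{-j}x\notin E\}$ and shows each $F_n(E)$ fails to be dense by exhibiting the non-empty open set $H=E\cap T^{-(n+1)}(E)$ disjoint from $F_n(E)$, then invokes the effective Banach--Mazur characterization (\cref{thm:effbm1}) so that Player 2's avoidance strategy is computable. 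Your observation that measure preservation plus boundedness rules out non-empty open wandering sets is exactly the bridge between the classical hypothesis (``measure preserving'') and the hypothesis the paper substitutes in the effective statement (``no non-empty open wandering set''), which the paper assumes outright rather than derives. Finally, you also prove the measure-zero half by the standard Poincar\'e disjointness argument, which the paper does not prove at all (it defers the effective measure-theoretic version to the ergodic-theorem literature). In short: your approach buys an elementary, self-contained proof of both conclusions of the classical statement; the paper's approach buys effectivity of the category half, which is what its framework is designed for.
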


A more general topological recurrence theorem for a non-invertible map over Baire space is known \cite{Malicky2007}, but for the effective version, we restrict ourselves to the case of computable homeomorphisms over computable Euclidean space. We take $T$ to be a computable homeomorphism onto the space such that the images (and inverse images) of basic open sets are computable, and the space under $T$ \textit{does not admit any non-empty open wandering set}. This crucial assumption is part of the theorem statement and also remarked in the proof. 

We now characterize the set of non-recurrent points of the space by an effective Banach-Mazur game. We see that by definition of the set, we can come up with a winning strategy for one of the players. Then, by the results in \cref{sec:effbmgame}, we obtain the desired characterization. 

\begin{theorem}[Effective Poincar\'e Recurrence Theorem for Category]\label{thm:effcatprt}
	For a bounded c.e. open region $X\subseteq\mathbb{R}^n$, equipped with a computable homeomorphism $T$ onto itself, admitting no non-empty wandering open set, all the points of $X$, except a set of effective first category, are recurrent under $T$. 
\end{theorem}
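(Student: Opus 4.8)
The plan is to reduce the statement to the characterization of effective first category sets provided by \cref{thm:effbm1}. Take $M$ to be the set of non-recurrent points of $X$ under $T$ and $C=X\setminus M$ the set of recurrent points, so that $M\coprod C=X$. The region $X$, being a bounded c.e. open subset of $\R^n$, is a computable metric space, and in particular a strongly computable $T_0$ space to which \cref{thm:effbm1} applies; completeness of $\R^n$ will be available for \cref{lem:cantor}. If I can exhibit an effective winning strategy for $P_2$ in the game $BM\langle M,C\rangle$ --- that is, a uniformly computable way for $P_2$ to force the intersection $\bigcap_{n\ge1}G_n$ into the set of recurrent points --- then \cref{thm:effbm1} immediately yields that $M$ is of effective first category, which is exactly the claim that all points outside an effective first category set are recurrent.

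The dynamical heart of the argument, and the step I expect to be the main obstacle, is the following return lemma, which is where the no-wandering hypothesis is consumed: for every non-empty basic open $U\subseteq X$ and every threshold $k\ge1$, the set $U\cap\bigcup_{n\ge k}T^{-n}U$ is dense in $U$. I would prove this by contradiction. If some non-empty open $V\subseteq U$ missed $\bigcup_{n\ge k}T^{-n}U$, then $V\cap T^{-n}V=\varnothing$ for all $n\ge k$. Letting $j^\ast$ be the largest $j<k$ with $V\cap T^{-j}V\ne\varnothing$ (if none exists, $V$ itself is wandering) and putting $P=V\cap T^{-j^\ast}V$, one checks that $T^{j^\ast}(P)$ is a non-empty open set every point of which never returns to $V$; hence $T^{j^\ast}(P)$ is a non-empty open wandering set, contradicting the hypothesis. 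This lemma is precisely what upgrades ``no wandering open set'' into the existence of returns at \emph{arbitrarily large} times, and it must be stated so that a witnessing time $n\ge k$ can be located effectively.

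With the lemma in hand, $P_2$'s strategy is a dovetailed construction. At each move $P_2$ plays a basic open ball with $\overline{G_{2k}}\subsetneq G_{2k-1}$ and $diam\ G_{2k}<1/k$, which is possible by \cref{lem:propersubset}; since the nested non-empty closed sets $\overline{G_{2k}}$ in $\R^n$ have diameters tending to $0$, \cref{lem:cantor} gives a single point $x\in\bigcap_{k}\overline{G_{2k}}$, and $x\in X$ because each $\overline{G_{2k}}\subseteq G_{2k-1}\subseteq X$. Simultaneously, along a schedule visiting every basis index $m$ infinitely often, $P_2$ runs the requirement ``secure one further return to $B_m$''. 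When this requirement is attended and the current set $G_{2k-1}$ is confirmed (via the c.e. inclusion test) to lie in $B_m$, $P_2$ searches $n=t,t+1,\dots$, where $t$ exceeds every return time already secured for $B_m$, for the first $n$ with $G_{2k-1}\cap T^{-n}B_m\ne\varnothing$; the lemma guarantees such an $n$, and decidability of disjointness in the strongly computable space $\R^n$ together with c.e.-openness of $T^{-n}B_m$ makes the search halt effectively. $P_2$ then plays a small ball $G_{2k}\subseteq G_{2k-1}\cap T^{-n}B_m$ and records the time $n$; otherwise $P_2$ merely shrinks.

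Finally I would verify that $x$ is recurrent. For any $B_m$ with $x\in B_m$, the shrinking diameters force $G_{2k}\subseteq B_m$ from some stage onward, and nesting keeps all later sets inside $B_m$; thereafter each attendance to $m$ secures a return to $B_m$ at a strictly larger time, so $x$ returns to $B_m$ at infinitely many distinct times. As every neighbourhood of $x$ contains some basic $B_m$ with $x\in B_m$ (and in turn some $G_{2k}\subseteq B_m$), this gives infinitely many returns to every neighbourhood of $x$, i.e. $x\in C$, and so $P_2$ wins. Since the whole construction --- diameter control, the dovetailed schedule, the inclusion tests, and the bounded search for return times --- is uniformly computable, this is an effective winning strategy for $P_2$, and \cref{thm:effbm1} then shows that the non-recurrent set $M$ is of effective first category, completing the proof.
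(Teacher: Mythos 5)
Your proposal is correct, but it takes a genuinely different route from the paper's proof. The paper argues one neighborhood at a time: it fixes a c.e.\ open set $E$, decomposes the points failing to return to $E$ as $N(E)=\bigcup_n F_n(E)$ with $F_n(E)=\{x\in E:(\forall j>n)\,T^{-j}x\notin E\}$, and has $P_2$ simply pick, at round $n$, a basic open set inside $G_{2n-1}\setminus\overline{F_n(E)}$; the no-wandering hypothesis enters only through the claim that $H=E\cap T^{-(n+1)}(E)$ is a non-empty open set disjoint from $F_n(E)$. No point of the intersection is ever constructed, no metric or completeness is used, and the full theorem is (implicitly) the c.e.\ union of the per-$E$ conclusions over a basis. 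You instead play a single game against the entire non-recurrent set $M$: diameter control via \cref{lem:propersubset} together with \cref{lem:cantor} collapses the play to one point $x\in X$, and a dovetailed requirement construction forces $x$ to return, at arbitrarily large times, to every basic ball that eventually contains the play, so that $x$ is recurrent and $P_2$ wins outright; \cref{thm:effbm1} then finishes as in the paper. Beyond the structural difference, your return lemma (density of $U\cap\bigcup_{n\ge k}T^{-n}U$ in $U$, proved by the maximal-$j^\ast$ argument) extracts strictly more from the no-wandering hypothesis than the paper does, and this is a genuine merit rather than overkill: for a \emph{fixed} exponent $k+1$ the non-emptiness of $E\cap T^{-(k+1)}(E)$ does not actually follow from the absence of wandering open sets (a rigid rotation of an open disc by $2\pi/3$ has no wandering open sets, yet a small off-centre ball $E$ has $E\cap T^{-1}(E)=\varnothing$), and the paper's argument as written also only exhibits a single open set missing $F_n(E)$ (non-density), whereas the strategy needs an open subset of the \emph{arbitrary} set $G_{2n-1}$ avoiding $F_n(E)$ (nowhere density) --- both of which your lemma supplies. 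The trade-off: the paper's route is shorter and purely topological, needing neither metric, completeness, nor priority-style bookkeeping; your route costs that machinery but handles all neighborhoods simultaneously (no final union over a basis), and every play of your strategy produces an explicit recurrent point.
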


We work with the basis $\eta = \{B(q,d) : d \in \Q, q \in \Q^n\}$ for $\R^n$. We fix a representation $\nu: \Sigma^* \dashrightarrow \beta$ such that $\nu\left(\langle e_1(x), e_2(d)\rangle\right) = B(x,d)$, where $e_2: \Sigma^* \to \Q^d$ and $e_2: \Sigma^* \to \Q$ are computable bijections, and $\langle,\rangle: \Sigma^* \times \Sigma^* \to \Sigma^*$ is a computable bijective encoding for pairs of strings. For inputs which are not of the above form, $\nu$ is undefined. Each rational has a computable \emph{name} $\{B(q,2^{-n}) : n \in\N\}$ where each element in the set is encoded using $\nu$. 

\begin{proof}
	Let $E\subsetneq X$ be a c.e. open set. Let $N(E)$ be the set defined as 
	\begin{align*}
		N(E) = \{x \in E :\left|\{j \in \N: T^{-j}x \in	E\}\right| < \infty\}. 
	\end{align*} 
	Then $N(E)$ is the set of non-recurrent points in $E$. Consider the effective Banach-Mazur game $BM\langle N(E), X\setminus N(E)\rangle$. We show that Player 2 has an effective winning	strategy in this game, establishing that $N(E)$ is a set of effective first category.
	
	Let $\mathcal{G}$ be the set of all basic open balls. Suppose, for any round $n \ge 1$, player 1 selects $G_{2n-1}\in\mathcal{G}$, a basic open set. We have $G_1 \supseteq G_2 \supseteq \dots \supseteq G_{2n-1}$. Consider the set 
	\begin{align*}
		F_n(E) = \{x \in E : (\forall j > n)(T^{-j}x \notin	E)\}. 
	\end{align*}
	Observe that $N(E)=\bigcup_{n\in\mathbb{N}}F_n(E)$. 
	
	We now show that every $F_n(E)$ is non-dense in $E$. Consider $F_k(E)$. Observe that $F_1(E)\supseteq F_2(E)\supseteq F_3(E)\supseteq\cdots$. Consider the set $H=\{x\in E:T^{-(k+1)}x\in E\}$. Clearly $H=E\cap T^{-(k+1)}(E)$. Since $E$ is a c.e. open set and $T$ is a computable homeomorphism, making $T^{-(k+1)}(E)$ c.e. open, $H$ is a c.e. open set. Now, by definition of $F_k(E)$, 
	\begin{align*}
		H\cap F_k(E)&=E\cap T^{-(k+1)}(E)\cap F_k(E)=\varnothing, 
	\end{align*}
	since $T^{-(k+1)}(E)\cap F_k(E)=\varnothing$. Also, $H\ne\varnothing$ since there are no non-empty open wandering sets. Hence, $H\cap(F_k(E))^c$ contains a non-empty c.e. open set. 
	
	Thus, the set $G_{2n-1} \setminus \overline{F_n(E)}$ contains a non-empty c.e. open set, uniformly in $n$. Let $G$ be the first basic open set in the computable enumeration of $G_{2n-1} \setminus \overline{F_n(E)}$. Player 2 plays $G_{2n}=G$. Then, by the definitions of the sets	$G_{2n}$, $n \ge 1$, no point in $N(E)$ can be present in $\bigcap_{n\in\N} G_n$. Thus, $N(E) \bigcap \left(\bigcap_{n\in\N} G_n\right) = \emptyset$. Hence the above strategy is an effective winning strategy for Player 2 in $BM\langle N(E),X\setminus N(E)\rangle$, establishing by \cref{thm:effbm1} that $N(E)$ is a set of effective first category. 
\end{proof}


\section*{Acknowledgments}
The authors thank anonymous referees for their valuable comments on an
earlier draft of this paper, and Alexander Melnikov for insightful
discussions.

\bibliography{main_jabref_shared}

@article{YMT99,
  title={Effective properties of sets and functions in metric spaces with computability structure},
  author={Yasugi, Mariko and Mori, Takakazu and Tsujii, Yoshiki},
  journal={Theoretical Computer Science},
  volume={219},
  number={1-2},
  pages={467--486},
  year={1999},
  publisher={Elsevier}
}

@InProceedings{Fenner1995,
  author    = {S. A. Fenner},
  booktitle = {Proceedings of the Tenth Structure in Complexity Theory Conference},
  title     = {Resource-bounded category: a stronger approach},
  year      = {1995},
  pages     = {182--192},
  publisher = {IEEE Computer Society Press},
}

@InProceedings{Lutz1987,
  author    = {J. H. Lutz},
  booktitle = {Proceedings of the Second Structure in Complexity Theory Conference},
  title     = {Resource-Bounded {Baire} Category and Small Circuits in Exponential Space},
  year      = {1987},
  pages     = {81--91},
}

@Book{Oxtoby1980,
  author    = {J. C. Oxtoby},
  publisher = {Springer-Verlag},
  title     = {Measure and Category},
  year      = {1980},
  address   = {Berlin},
  edition   = {second},
  doi       = {10.1007/978-1-4684-9339-9_22},
  issn      = {0072-5285},
  pages     = {86--91},
}

@Article{Vyugin1997,
  author  = {V. V. V'yugin},
  journal = {Theory of Probability and Its Applications},
  title   = {Effective convergence in probability and an ergodic theorem for individual random sequences},
  year    = {1997},
  number  = {1},
  pages   = {39--50},
  volume  = {42},
}

@Book{Furstenberg1981,
  author    = {Furstenberg, Hillel},
  publisher = {Princeton University Publishers},
  title     = {Recurrence in ergodic theory and combinatorial number theory},
  year      = {1981},
}

@InProceedings{Nandakumar2008,
  author    = {Nandakumar, S.},
  booktitle = {Proceedings of the 40th Annual Symposium on the Theory of Computing},
  title     = {An effective ergodic theorem and some applications},
  year      = {2008},
  pages     = {39--44},
}

@Article{Lutz1992,
  author  = {Lutz, J. H.},
  journal = {Journal of Computer and System Sciences},
  title   = {Almost Everywhere High Non-Uniform Complexity},
  year    = {1992},
  pages   = {220--258},
  volume  = {44},
}

@Article{Kano1993,
  author  = {Kano, H.},
  journal = {Osaka Journal of Mathematics},
  title   = {General Constructions of Normal numbers of the {K}orobov type},
  year    = {1993},
  pages   = {909--919},
  volume  = {4},
}

@Book{Walters1982,
  author    = {P. Walters},
  publisher = {Springer Verlag},
  title     = {An introduction to ergodic theory},
  year      = {1982},
  address   = {New York},
}

@InProceedings{Hoyrup2009b,
  author       = {Hoyrup, Mathieu and Rojas, Crist{\'o}bal},
  booktitle    = {International Colloquium on Automata, Languages, and Programming},
  title        = {Applications of effective probability theory to {M}artin-{L}{\"o}f randomness},
  year         = {2009},
  organization = {Springer},
  pages        = {549--561},
}

@Article{Downey2019a,
  author  = {Downey, Rodney G. and Nandakumar, Satyadev and Nies, Andr{\'e}},
  journal = {Notre Dame Journal of Formal Logic},
  title   = {Martin-L\"{o}f randomness implies multiple recurrence in effectively closed sets},
  year    = {2019},
  pages   = {491--502},
  volume  = {60},
  issue   = {3},
}

@Article{Nandakumar2016,
  author    = {Nandakumar, Satyadev and Vangapelli, Santhosh Kumar},
  journal   = {Theory of Computing Systems},
  title     = {Normality and finite-state dimension of {L}iouville
                  numbers},
  year      = 2016,
  number    = 3,
  pages     = {392--402},
  volume    = 58,
  publisher = {Springer US},
}

@MastersThesis{Jindal2014,
  author = {Pankaj Jindal},
  school = {Indian Institute of Technology Kanpur},
  title  = {Towards Proving that Poincare Non-Recurrent Points are Effective First Category Set},
  year   = {2014},
}

@MastersThesis{Vijayvargiya2014,
  author = {Nitest Vijayvargiya},
  school = {Indian Institute of Technology Kanpur},
  title  = {Poincare Non-Recurrent Points form an Effective Measure Zero Set},
  year   = {2014},
}

@Article{Grubba2007,
  author    = {Tanja Grubba and Matthias Schr{\"{o}}der and Klaus Weihrauch},
  journal   = {Math. Log. Q.},
  title     = {Computable metrization},
  year      = {2007},
  number    = {4-5},
  pages     = {381--395},
  volume    = {53},
  bibsource = {dblp computer science bibliography, https://dblp.org},
  biburl    = {https://dblp.org/rec/journals/mlq/GrubbaSW07.bib},
  doi       = {10.1002/MALQ.200710009},
}

@Book{Engelking1989,
  author     = {Engelking, Ryszard},
  publisher  = {Heldermann Verlag, Berlin},
  title      = {General topology},
  year       = {1989},
  edition    = {Second},
  isbn       = {3-88538-006-4},
  note       = {Translated from the Polish by the author},
  series     = {Sigma Series in Pure Mathematics},
  volume     = {6},
  mrclass    = {54-01 (54-02)},
  mrnumber   = {1039321},
  mrreviewer = {Gary\ Gruenhage},
  pages      = {viii+529},
}

@InProceedings{Brattka2001,
  author    = {Vasco Brattka},
  booktitle = {Mathematical Foundations of Computer Science 2001, 26th International Symposium, {MFCS} 2001 Marianske Lazne, Czech Republic, August 27-31, 2001, Proceedings},
  title     = {Computable Versions of Baire's Category Theorem},
  year      = {2001},
  editor    = {Jir{\'{\i}} Sgall and Ales Pultr and Petr Kolman},
  pages     = {224--235},
  publisher = {Springer},
  series    = {Lecture Notes in Computer Science},
  volume    = {2136},
  bibsource = {dblp computer science bibliography, https://dblp.org},
  biburl    = {https://dblp.org/rec/conf/mfcs/Brattka01.bib},
  doi       = {10.1007/3-540-44683-4\_20},
  url       = {https://doi.org/10.1007/3-540-44683-4_20},
}

@InCollection{Oxtoby1957,
  author    = {Oxtoby, John C.},
  booktitle = {Contributions to the theory of games, vol. 3},
  publisher = {Princeton Univ. Press, Princeton, NJ},
  title     = {The {B}anach-{M}azur game and {B}anach category theorem},
  year      = {1957},
  pages     = {159--163},
  series    = {Ann. of Math. Stud., no. 39},
  keywords  = {52.00},
  mrnumber  = {93741},
}

@Article{Malicky2007,
  author   = {Mali\v{c}k\'{y}, Peter},
  journal  = {Topology Appl.},
  title    = {Category version of the {P}oincar\'e{} recurrence theorem},
  year     = {2007},
  issn     = {0166-8641,1879-3207},
  number   = {14},
  pages    = {2709--2713},
  volume   = {154},
  doi      = {10.1016/j.topol.2007.05.004},
  fjournal = {Topology and its Applications},
  mrclass  = {37A99 (37B20 54H05 54H20)},
  mrnumber = {2340953},
  url      = {https://doi.org/10.1016/j.topol.2007.05.004},
}

@Article{Mycielski1956,
  author   = {Mycielski, Jan and Swierczkowski, S. and Zipolhkeba, A.},
  journal  = {Bull. Acad. Polon. Sci. Cl. III.},
  title    = {On infinite positional games},
  year     = {1956},
  pages    = {485--488},
  volume   = {4},
  keywords = {90.0X},
  mrnumber = {86725},
}

@Article{Banach1930,
  author  = {Banach, S.},
  journal = {Fund. Math.},
  title   = {Th\'{e}oreme sur les ensembles des premi\`{e}re cat\'{e}gorie},
  year    = {1930},
  pages   = {395-398},
  volume  = {16},
}

@Book{Kuratowski1948,
  author    = {Kuratowski},
  publisher = {Warsaw-Wroclaw},
  title     = {Topologie I},
  year      = {1948},
}

@Article{Hilmy1940,
  author     = {Hilmy, Heinrich},
  journal    = {Rec. Math. [Mat. Sbornik] N.S.},
  title      = {Sur la r\'ecurrence ergodique dans les syst\`emes dynamiques},
  year       = {1940},
  pages      = {101--109},
  volume     = {7/49},
  fjournal   = {Rec. Math. [Mat. Sbornik] N.S.},
  mrclass    = {46.3X},
  mrnumber   = {2029},
  mrreviewer = {Gustav\ A.\ Hedlund},
}

@Book{Poincare1899,
  author    = {Poincar\'{e}, H.},
  publisher = {Gauthier-Villars},
  title     = {Les m\'{e}thodes nouvelles de la m\'{e}chanique c\'{e}leste},
  year      = {1899},
  volume    = {3},
}

@Book{Birkhoff1927,
  author    = {Birkhoff, G. D.},
  publisher = {American Mathematical Society Colloq. Publ.},
  title     = {Dynamical Systems},
  year      = {1927},
  volume    = {9},
}

@Article{Pinsker1973,
  author  = {Pinsker, M.. A.},
  journal = {SIAM J. Computing},
  title   = {On the complexity of a conentrator},
  year    = {1973},
}

@Article{Zermelo1896,
  author  = {Zermelo, E.},
  journal = {Ann. Phys.},
  title   = {Uber einen Satz der Dynamik and die Mechanische W\"{a}rmetheorie.},
  year    = {1896},
  pages   = {485-494},
  volume  = {57},
}

@Article{Boltzmann1896,
  author  = {Boltzmann, L.},
  journal = {Ann. Phy.},
  title   = {\"{U}ber die mechanische Bedeutung des zweiten Hauptsatzes der W\"{a}rmetheorie},
  year    = {1896},
  number  = {773},
  volume  = {57},
}

@Book{Mauldin2015,
  editor    = {Mauldin, R. Daniel},
  publisher = {Birkh\"{a}user/Springer, Cham},
  title     = {The {S}cottish {B}ook},
  year      = {2015},
  edition   = {Second},
  isbn      = {978-3-319-22896-9; 978-3-319-22897-6},
  note      = {Mathematics from the Scottish Caf\'{e} with selected problems from the new Scottish Book, Including selected papers presented at the Scottish Book Conference held at North Texas University, Denton, TX, May 1979},
  doi       = {10.1007/978-3-319-22897-6},
  keywords  = {00B25 (01A60 03E15)},
  mrnumber  = {3242261},
  pages     = {xvii+322},
}

@article{Liouville1844,
  author   = {Liouville, Joseph},
  title    = {M{\'e}moires et communications},
  journal  = {Comptes rendus de l'Acad{\'e}mie des Sciences},
  year     = {1844},
  month    = {may},
  volume   = {18},
  number   = {20, 21},
  pages    = {883--885, 910--911},
  language = {French}
}

@article{Staiger2002,
title = {The Kolmogorov complexity of real numbers},
journal = {Theoretical Computer Science},
volume = {284},
number = {2},
pages = {455-466},
year = {2002},
issn = {0304-3975},
doi = {https://doi.org/10.1016/S0304-3975(01)00102-5},
url = {https://www.sciencedirect.com/science/article/pii/S0304397501001025},
author = {Ludwig Staiger}
}

@article{Lisagor1981,
  author  = {Lisagor, L. R.},
  title   = {The {Banach-Mazur} game},
  journal = {Mathematics of the USSR-Sbornik},
  year    = {1981},
  volume  = {38},
  number  = {2},
  pages   = {201--216},
  doi     = {10.1070/SM1981v038n02ABEH001229}
}

@inproceedings{BreutzmannJuedesLutz2001,
  author     = {Breutzmann, Josef M. and Juedes, David W. and Lutz, Jack H.},
  title      = {Baire Category and Nowhere Differentiability for Feasible Real Functions},
  booktitle  = {Algorithms and Computation. ISAAC 2001},
  series     = {Lecture Notes in Computer Science},
  volume     = {2223},
  publisher  = {Springer, Berlin, Heidelberg},
  year       = {2001},
  pages      = {219--230},
  doi        = {10.1007/3-540-45678-3_20}
}

\end{document}